\newlength{\mySubFigSize}
\renewcommand{\leq}{\ensuremath{\leqslant}}
\renewcommand{\geq}{\ensuremath{\geqslant}}
\newcommand{\minimize}[2]{\ensuremath{\underset{\substack{{#1}}}%
{\text{minimize}}\;\;#2 }}
\newcommand{\scal}[2]{{\langle{{#1}\mid{#2}}\rangle}}
\newcommand{\menge}[2]{\big\{{#1}~\big |~{#2}\big\}} 
\newcommand{\KKK}{\ensuremath{\boldsymbol{\mathcal K}}}
\newcommand{\HHH}{\ensuremath{\boldsymbol{\mathcal H}}}
\newcommand{\HH}{\ensuremath{{\mathcal H}}}
\newcommand{\GG}{\ensuremath{{\mathcal G}}}
\newcommand{\Sum}{\ensuremath{\displaystyle\sum}}
\newcommand{\emp}{\ensuremath{{\varnothing}}}
\newcommand{\Id}{\ensuremath{\operatorname{Id}}\,}
\newcommand{\cart}{\ensuremath{\raisebox{-0.5mm}{\mbox{\LARGE{$\times$}}}}}
\newcommand{\Tt}{\ensuremath{{\mathfrak{T}}}\,}
\newcommand{\RPP}{\ensuremath{\left]0,+\infty\right[}}
\newcommand{\RX}{\ensuremath{\left]-\infty,+\infty\right]}}
\newcommand{\NN}{\ensuremath{\mathbb N}}
\newcommand{\weakly}{\ensuremath{\:\rightharpoonup\:}}
\newcommand{\ran}{\ensuremath{\text{\rm ran}\,}}
\newcommand{\pinf}{\ensuremath{{+\infty}}}
\newcommand{\prox}{\ensuremath{\text{\rm prox}}}
\newcommand{\gra}{\ensuremath{\text{\rm gra}\,}}
\newcommand{\infconv}{\ensuremath{\mbox{\small$\,\square\,$}}}
\newcommand{\zeroun}{\ensuremath{\left]0,1\right[}}
\newtheorem{theorem}{Theorem}[section]
\newtheorem{lemma}[theorem]{Lemma}
\newtheorem{corollary}[theorem]{Corollary}
\newtheorem{proposition}[theorem]{Proposition}
\theoremstyle{plain}{\theorembodyfont{\rmfamily}%
}
\theoremstyle{plain}{\theorembodyfont{\rmfamily}%
\newtheorem{example}[theorem]{Example}}
\theoremstyle{plain}{\theorembodyfont{\rmfamily}%
\newtheorem{remark}[theorem]{Remark}}
\theoremstyle{plain}{\theorembodyfont{\rmfamily}%
}
\theoremstyle{plain}{\theorembodyfont{\rmfamily}%
}
\theoremstyle{plain}{\theorembodyfont{\rmfamily}%
}
\theoremstyle{plain}{\theorembodyfont{\rmfamily}%
\newtheorem{problem}[theorem]{Problem}}
\numberwithin{equation}{section}
\begin{document}

\title{\sffamily\LARGE Best Approximation from the Kuhn-Tucker Set
of\\ Composite Monotone Inclusions\footnote{Contact author: 
P. L. Combettes, \ttfamily{plc@ljll.math.upmc.fr},
phone: +33 1 4427 6319, fax: +33 1 4427 7200.}}

\author{
Abdullah Alotaibi,$\!^{\natural}$
~Patrick L. Combettes,$\!^{\flat}$~ and 
~Naseer Shahzad$\,^\natural$
\\[5mm]
\small $^\natural$King Abdulaziz University\\
\small Department of Mathematics, P. O. Box 80203\\
\small Jeddah 21859, Saudi Arabia\\[5mm]
\small $^\flat$Sorbonne Universit\'es -- UPMC Univ. Paris 06\\
\small UMR 7598, Laboratoire Jacques-Louis Lions\\
\small F-75005, Paris, France\\[4mm]
}

\date{~}

\maketitle

\vskip 8mm

\begin{abstract} 
\noindent
Kuhn-Tucker points play a fundamental role in the analysis and the
numerical solution of monotone inclusion problems, providing in
particular both primal and dual solutions. We propose a class of 
strongly convergent algorithms for constructing the best
approximation to a reference point from the set of Kuhn-Tucker 
points of a general Hilbertian composite monotone inclusion
problem. Applications to systems of coupled monotone inclusions are
presented. Our framework does not impose additional assumptions on 
the operators present in the formulation, and it does not require
knowledge of the norm of the linear operators involved in the
compositions or the inversion of linear operators.
\end{abstract} 

{\bfseries Keywords} 
best approximation,
duality,
Haugazeau,
monotone operator,
primal-dual algorithm,
splitting algorithm,
strong convergence

{\bfseries Mathematics Subject Classifications (2010)} 
Primary 47H05, 41A50; Secondary 65K05, 41A65, 90C25.

\maketitle

\newpage

\section{Introduction}

Let $\HH$ and $\GG$ be real Hilbert spaces, let $L\colon\HH\to\GG$
be a bounded linear operator, and let $f\colon\HH\to\RX$ and 
$g\colon\GG\to\RX$ be proper lower semicontinuous convex functions.
Classical Fenchel-Rockafellar duality \cite{Rock67} concerns the
interplay between the optimization problem 
\begin{equation}
\label{e:9F842h20a}
\minimize{x\in\HH}{f(x)+g(Lx)}
\end{equation}
and its dual 
\begin{equation}
\label{e:9F842h20b}
\minimize{v^*\in\GG}{f^*(-L^*v^*)+g^*(v^*)}.
\end{equation}
An essential ingredient in the analysis of such dual problems 
is the associated Kuhn-Tucker set \cite{Rock74}
\begin{equation}
\label{e:9F842h20c}
\boldsymbol{Z}=\menge{(x,v^*)\in\HH\oplus\GG}
{-L^*v^*\in\partial f(x)\:\;\text{and}\;Lx\in\partial g^*(v^*)},
\end{equation}
which involves the maximally monotone subdifferential operators 
$\partial f$ and $\partial g^*$.
A fruitful generalization of 
\eqref{e:9F842h20a}--\eqref{e:9F842h20b} is obtained by
pairing the inclusion $0\in Ax+L^*BLx$ on $\HH$ with the dual
inclusion $0\in -LA^{-1}(-L^*v^*)+B^{-1}v^*$ on $\GG$, where $A$
and $B$ are maximally monotone operators acting on $\HH$ and $\GG$,
respectively. Such operator duality has been studied in 
\cite{Ecks99,Penn00,Robi99,Robi01} and the first splitting
algorithm for solving such composite inclusions was proposed in
\cite{Siop11}. The strategy adopted in that paper was to use a
standard 2-operator splitting method to construct a point in the 
Kuhn-Tucker set $\boldsymbol{Z}=\menge{(x,v^*)\in\HH\oplus\GG}
{-L^*v^*\in Ax\:\;\text{and}\;Lx\in B^{-1}v^*}$ and hence obtain a 
primal-dual solution (see also
\cite{Bot13a,Siop13,Svva12,Opti13,Bang13} for variants of this
approach). In \cite{Genn13} we investigated a different strategy
based on an idea first proposed in \cite{Svai08} for solving the
inclusion $0\in Ax+Bx$. In this framework, at each iteration,
one uses points in the graphs of $A$ and $B$ to construct a closed 
affine half-space of $\HH\oplus\GG$ containing $\boldsymbol{Z}$; 
the primal-dual update is then obtained as the
projection of the current iterate onto it.
The resulting Fej\'er-monotone algorithm provides only weak
convergence to an unspecified Kuhn-Tucker point. In the present
paper we propose a strongly convergent modification of these
methods for solving the following best approximation problem.   

\begin{problem}
\label{prob:11}
Let $\HH$ and $\GG$ be real Hilbert spaces, and set 
$\KKK=\HH\oplus\GG$. Let 
$A\colon\HH\to 2^{\HH}$ and $B\colon\GG\to 2^{\GG}$ be maximally 
monotone operators, and let $L\colon\HH\to\GG$ be a bounded linear 
operator. Let $(x_0,v_0^*)\in\KKK$, assume that the inclusion
problem
\begin{equation}
\label{e:primal}
\text{find}\;\;x\in\HH\;\;\text{such that}\;\;0\in Ax+L^*BLx
\end{equation}
has at least one solution, and consider the dual problem 
\begin{equation}
\label{e:dual}
\text{find}\;\;v^*\in\GG\;\;\text{such that}\;\; 
0\in -LA^{-1}(-L^*v^*)+B^{-1}v^*.
\end{equation}
The problem is to find the best approximation 
$(\overline{x},\overline{v}^*)$ to $(x_0,v_0^*)$ 
from the associated Kuhn-Tucker set
\begin{equation}
\label{e:Aas7B09k21a}
\boldsymbol{Z}=\menge{(x,v^*)\in\KKK}
{-L^*v^*\in Ax\:\;\text{and}\;Lx\in B^{-1}v^*}.
\end{equation}
\end{problem}

The principle of our algorithm goes back to the work of Yves 
Haugazeau \cite{Haug68} for finding the projection of a point 
onto the intersection of closed convex sets by means of projections
onto the individual sets. Haugazeau's method was generalized in
several directions and applied to a variety of problems in nonlinear
analysis and optimization in \cite{Sico00}. In \cite{Moor01}, it 
was formulated as an abstract convergence principle for turning a 
class of weakly convergent methods into strongly convergent ones 
(see also \cite{Marq13} for recent related work). In the area of 
monotone inclusions, Haugazeau-like
methods were used in \cite{Solo00} for solving $x\in A^{-1}0$ and
in \cite{Moor01} for solving $x\in\bigcap_{i=1}^mA_i^{-1}0$. They
were also used in splitting method for solving $0\in Ax+Bx$ as a
modification of the forward-backward splitting algorithm in
\cite{Hirs05} and \cite[Corollary~29.5]{Livre1}, and as a
modification of the Douglas-Rachford algorithm in 
\cite{Joat06} and \cite{Zhan13}. 

The paper is organized as follows. Section~\ref{sec:2} is devoted
to a version of an abstract Haugazeau principle. The algorithms for 
solving Problem~\ref{prob:11} are presented in Section~\ref{sec:3},
where their strong convergence is established.
In Section~\ref{sec:4}, we present an extension to systems of
coupled monotone inclusions and consider applications to the
relaxation of inconsistent common zero problems and to structured
multivariate convex minimization problems. 

\noindent
{\bfseries Notation.}
Our notation is standard and follows \cite{Livre1}, where the
necessary background on monotone operators and convex analysis is
available. The scalar product of a Hilbert space is denoted by 
$\scal{\cdot}{\cdot}$ and the associated norm by $\|\cdot\|$.
We denote respectively by $\weakly$ and $\to$ weak and strong 
convergence, and by $\Id$ the identity operator. Let $\HH$ and 
$\GG$ be real Hilbert space. The Hilbert direct sum of $\HH$ and 
$\GG$ is denoted by $\HH\oplus\GG$, and the power set of $\HH$ by
$2^{\HH}$. Now let $A\colon\HH\to 2^{\HH}$. Then
$\ran A$ is the range $A$, $\gra A$ the graph of $A$, 
$A^{-1}$ the inverse of $A$, and $J_A=(\Id+A)^{-1}$ the
resolvent of $A$. The projection operator onto a nonempty closed 
convex subset $C$ of $\HH$ is denoted by $P_C$ and  
$\Gamma_0(\HH)$ is the class of proper lower semicontinuous convex 
functions from $\HH$ to $\RX$. 
Let $f\in\Gamma_0(\HH)$. The conjugate of $f$ is 
$\Gamma_0(\HH)\ni f^*\colon u^*\mapsto
\sup_{x\in\HH}(\scal{x}{u^*}-f(x))$ and the subdifferential of $f$
is $\partial f\colon\HH\to 2^{\HH}\colon x\mapsto\menge{u^*\in\HH}
{(\forall y\in\HH)\;\:\scal{y-x}{u^*}+f(x)\leq f(y)}$.

\section{An abstract Haugazeau algorithm}
\label{sec:2}

In \cite[Th\'eor\`eme~3-2]{Haug68} Haugazeau proposed an ingenious 
method for projecting a point onto the intersection of closed 
convex sets in a Hilbert space using the projections onto the 
individual sets. Abstract versions of his method for projecting
onto a closed convex set in a real Hilbert space 
were devised in \cite{Sico00} and \cite{Moor01}. In this 
section, we present a formulation of this abstract principle which 
is better suited for our purposes.

Let $\HHH$ be a real Hilbert space. Given an ordered triplet 
$(\boldsymbol{x},\boldsymbol{y},\boldsymbol{z})\in\HHH^3$, we 
define
\begin{equation}
H(\boldsymbol{x},\boldsymbol{y})=\menge{\boldsymbol{h}\in\HHH}
{\scal{\boldsymbol{h}-\boldsymbol{y}}
{\boldsymbol{x}-\boldsymbol{y}}\leq 0}.
\end{equation}
Moreover, if $\boldsymbol{R}=H(\boldsymbol{x},\boldsymbol{y})\cap
H(\boldsymbol{y},\boldsymbol{z})\neq\emp$, we denote by
$Q(\boldsymbol{x},\boldsymbol{y},\boldsymbol{z})$ the 
projection of $\boldsymbol{x}$ onto $\boldsymbol{R}$. The principle 
of the algorithm to project a point $\boldsymbol{x}_0\in\HHH$ onto 
a nonempty closed convex set $\boldsymbol{C}\subset\HHH$ is to 
use at iteration $n$ the current iterate $\boldsymbol{x}_n$ to 
construct an outer approximation to $\boldsymbol{C}$ of the form 
$H(\boldsymbol{x}_0,\boldsymbol{x}_n)\cap 
H(\boldsymbol{x}_n,\boldsymbol{x}_{n+1/2})$; the update is then 
computed as the projection of $\boldsymbol{x}_0$ onto it, i.e., 
$\boldsymbol{x}_{n+1}=Q(\boldsymbol{x}_0,\boldsymbol{x}_n,
\boldsymbol{x}_{n+1/2})$. 

\begin{proposition}
\label{p:j7yG9i-9jmL406}
Let $\boldsymbol{C}$ be a nonempty closed convex subset of $\HHH$ 
and let $\boldsymbol{x}_0\in\HHH$. Iterate
\begin{equation}
\label{e:Aas7B09k20f}
\begin{array}{l}
\text{for}\;n=0,1,\ldots\\
\left\lfloor
\begin{array}{l}
\text{take}\;\boldsymbol{x}_{n+1/2}\in\HHH\;\text{such that}\;
\boldsymbol{C}\subset H(\boldsymbol{x}_n,\boldsymbol{x}_{n+1/2})\\
\boldsymbol{x}_{n+1}=Q\big(\boldsymbol{x}_0,\boldsymbol{x}_n,
\boldsymbol{x}_{n+1/2}\big).
\end{array}
\right.\\
\end{array}
\end{equation}
Then the sequence $(\boldsymbol{x}_n)_{n\in\NN}$ is well defined 
and the following hold:
\begin{enumerate}
\item
\label{p:j7yG9i-9jmL406i}
$(\forall n\in\NN)$ $\boldsymbol{C}\subset
H(\boldsymbol{x}_0,\boldsymbol{x}_n)\cap
H(\boldsymbol{x}_n,\boldsymbol{x}_{n+1/2})$.
\item
\label{p:j7yG9i-9jmL406ii}
$\sum_{n\in\NN}\|\boldsymbol{x}_{n+1}-
\boldsymbol{x}_n\|^2<\pinf$.
\item
\label{p:j7yG9i-9jmL406iii}
$\sum_{n\in\NN}\|\boldsymbol{x}_{n+1/2}-
\boldsymbol{x}_n\|^2<\pinf$.
\item
\label{p:j7yG9i-9jmL406iv}
Suppose that, for every $\boldsymbol{x}\in\HHH$ and every 
strictly increasing sequence $(k_n)_{n\in\NN}$ in $\NN$, 
$\boldsymbol{x}_{k_n}\weakly\boldsymbol{x}$
$\Rightarrow$ $\boldsymbol{x}\in\boldsymbol{C}$.
Then $\boldsymbol{x}_n\to P_{\boldsymbol{C}}\boldsymbol{x}_0$.
\end{enumerate}
\end{proposition}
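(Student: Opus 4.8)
The plan is to run a single induction that yields both the well-definedness of the iterates and \ref{p:j7yG9i-9jmL406i}, then to read off \ref{p:j7yG9i-9jmL406ii} and \ref{p:j7yG9i-9jmL406iii} from two Pythagorean inequalities, and finally to obtain \ref{p:j7yG9i-9jmL406iv} from boundedness together with a weak-to-strong upgrade. Everything rests on two elementary facts about the half-spaces. First, for all $\boldsymbol{x},\boldsymbol{y}\in\HHH$ one has $\boldsymbol{y}=P_{H(\boldsymbol{x},\boldsymbol{y})}\boldsymbol{x}$, since $H(\boldsymbol{x},\boldsymbol{y})$ is the closed half-space whose bounding hyperplane passes through $\boldsymbol{y}$ with normal $\boldsymbol{x}-\boldsymbol{y}$. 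Second, if $\boldsymbol{R}$ is nonempty closed convex and $\boldsymbol{p}=P_{\boldsymbol{R}}\boldsymbol{x}$, then the variational characterization of the projection states precisely that $\scal{\boldsymbol{r}-\boldsymbol{p}}{\boldsymbol{x}-\boldsymbol{p}}\leq0$ for all $\boldsymbol{r}\in\boldsymbol{R}$, i.e. $\boldsymbol{R}\subset H(\boldsymbol{x},\boldsymbol{p})$.

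For the induction I would show that $\boldsymbol{C}\subset H(\boldsymbol{x}_0,\boldsymbol{x}_n)$ for every $n\in\NN$. This is trivial for $n=0$ because $H(\boldsymbol{x}_0,\boldsymbol{x}_0)=\HHH$. Assuming it at stage $n$, the requirement $\boldsymbol{C}\subset H(\boldsymbol{x}_n,\boldsymbol{x}_{n+1/2})$ built into \eqref{e:Aas7B09k20f} gives $\boldsymbol{C}\subset\boldsymbol{R}_n$, where $\boldsymbol{R}_n=H(\boldsymbol{x}_0,\boldsymbol{x}_n)\cap H(\boldsymbol{x}_n,\boldsymbol{x}_{n+1/2})$. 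Since $\boldsymbol{C}\neq\emp$, the set $\boldsymbol{R}_n$ is nonempty, closed, and convex, so $\boldsymbol{x}_{n+1}=Q(\boldsymbol{x}_0,\boldsymbol{x}_n,\boldsymbol{x}_{n+1/2})=P_{\boldsymbol{R}_n}\boldsymbol{x}_0$ is well defined and \ref{p:j7yG9i-9jmL406i} holds. Applying the second elementary fact with $\boldsymbol{R}=\boldsymbol{R}_n$ and $\boldsymbol{p}=\boldsymbol{x}_{n+1}$ gives $\boldsymbol{R}_n\subset H(\boldsymbol{x}_0,\boldsymbol{x}_{n+1})$, whence $\boldsymbol{C}\subset H(\boldsymbol{x}_0,\boldsymbol{x}_{n+1})$, closing the induction.

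Next, $\boldsymbol{x}_{n+1}\in\boldsymbol{R}_n$ yields the two memberships $\boldsymbol{x}_{n+1}\in H(\boldsymbol{x}_0,\boldsymbol{x}_n)$ and $\boldsymbol{x}_{n+1}\in H(\boldsymbol{x}_n,\boldsymbol{x}_{n+1/2})$. The first reads $\scal{\boldsymbol{x}_{n+1}-\boldsymbol{x}_n}{\boldsymbol{x}_0-\boldsymbol{x}_n}\leq0$, and expanding $\|\boldsymbol{x}_0-\boldsymbol{x}_{n+1}\|^2$ about $\boldsymbol{x}_n$ converts it into $\|\boldsymbol{x}_0-\boldsymbol{x}_{n+1}\|^2\geq\|\boldsymbol{x}_0-\boldsymbol{x}_n\|^2+\|\boldsymbol{x}_{n+1}-\boldsymbol{x}_n\|^2$; hence $(\|\boldsymbol{x}_0-\boldsymbol{x}_n\|)_{n\in\NN}$ is nondecreasing, and since $\boldsymbol{C}\subset\boldsymbol{R}_n$ with $\boldsymbol{x}_{n+1}=P_{\boldsymbol{R}_n}\boldsymbol{x}_0$ forces $\|\boldsymbol{x}_0-\boldsymbol{x}_{n+1}\|\leq\|\boldsymbol{x}_0-P_{\boldsymbol{C}}\boldsymbol{x}_0\|$, it is bounded, hence convergent. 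Telescoping the displayed inequality bounds $\sum_{n\in\NN}\|\boldsymbol{x}_{n+1}-\boldsymbol{x}_n\|^2$ by $\|\boldsymbol{x}_0-P_{\boldsymbol{C}}\boldsymbol{x}_0\|^2$, giving \ref{p:j7yG9i-9jmL406ii}. The second membership reads $\scal{\boldsymbol{x}_{n+1}-\boldsymbol{x}_{n+1/2}}{\boldsymbol{x}_n-\boldsymbol{x}_{n+1/2}}\leq0$, whose Pythagorean consequence is $\|\boldsymbol{x}_n-\boldsymbol{x}_{n+1/2}\|^2\leq\|\boldsymbol{x}_n-\boldsymbol{x}_{n+1}\|^2$; summing and invoking \ref{p:j7yG9i-9jmL406ii} gives \ref{p:j7yG9i-9jmL406iii}.

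For \ref{p:j7yG9i-9jmL406iv}, boundedness of $(\boldsymbol{x}_n)_{n\in\NN}$ lets me extract from any subsequence a further weakly convergent one $\boldsymbol{x}_{k_n}\weakly\boldsymbol{x}$, and the standing hypothesis places $\boldsymbol{x}\in\boldsymbol{C}$. With $\ell=\lim_n\|\boldsymbol{x}_0-\boldsymbol{x}_n\|$, weak lower semicontinuity of the norm gives $\|\boldsymbol{x}_0-\boldsymbol{x}\|\leq\ell\leq\|\boldsymbol{x}_0-P_{\boldsymbol{C}}\boldsymbol{x}_0\|$, while $\boldsymbol{x}\in\boldsymbol{C}$ gives the reverse inequality; uniqueness of the projection then forces $\boldsymbol{x}=P_{\boldsymbol{C}}\boldsymbol{x}_0$ and $\ell=\|\boldsymbol{x}_0-P_{\boldsymbol{C}}\boldsymbol{x}_0\|$. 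Thus every weak sequential cluster point is $P_{\boldsymbol{C}}\boldsymbol{x}_0$, so $\boldsymbol{x}_n\weakly P_{\boldsymbol{C}}\boldsymbol{x}_0$; combined with $\|\boldsymbol{x}_0-\boldsymbol{x}_n\|\to\|\boldsymbol{x}_0-P_{\boldsymbol{C}}\boldsymbol{x}_0\|$, the Radon--Riesz property of the Hilbert norm upgrades this to $\boldsymbol{x}_n\to P_{\boldsymbol{C}}\boldsymbol{x}_0$. I expect this final passage from weak to strong convergence to be the crux: it is tempting to telescope $\|\boldsymbol{x}_0-\boldsymbol{x}_m\|^2-\|\boldsymbol{x}_0-\boldsymbol{x}_n\|^2$ into a Cauchy estimate, but that would require $\boldsymbol{x}_m\in H(\boldsymbol{x}_0,\boldsymbol{x}_n)$ for all $m\geq n$, whereas the half-spaces $H(\boldsymbol{x}_0,\boldsymbol{x}_n)$ are not nested and only the consecutive case $m=n+1$ is at hand; routing the argument instead through boundedness, the monotone convergence of the distances, and the weak-plus-norm criterion is what circumvents this obstruction.
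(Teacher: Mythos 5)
Your proof is correct and follows essentially the same route as the paper's: the same two facts about $H(\cdot,\cdot)$ (that $\boldsymbol{y}=P_{H(\boldsymbol{x},\boldsymbol{y})}\boldsymbol{x}$ and the variational characterization of projections), the same induction for well-definedness and \ref{p:j7yG9i-9jmL406i}, the same two Pythagorean estimates for \ref{p:j7yG9i-9jmL406ii}--\ref{p:j7yG9i-9jmL406iii}, and the same weak-cluster-point-plus-norm-convergence upgrade for \ref{p:j7yG9i-9jmL406iv}. Your closing remark about why a Cauchy/telescoping shortcut fails (the half-spaces $H(\boldsymbol{x}_0,\boldsymbol{x}_n)$ are not nested) is a sound observation, but it does not alter the argument, which matches the paper's.
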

\begin{proof}
The proof is similar to those found in \cite[Section~3]{Moor01} 
and \cite[Section~3]{Sico00}. First, recall that the projector onto 
a nonempty closed convex subset $\boldsymbol{D}$ of $\HHH$ is 
characterized by \cite[Theorem~3.14]{Livre1}
\begin{equation}
\label{e:our-old-mor2001}
(\forall\boldsymbol{x}\in\HH)\quad P_{\boldsymbol{D}}\boldsymbol{x}
\in\boldsymbol{D}\quad\text{and}\quad\boldsymbol{D}\subset 
H(\boldsymbol{x},P_{\boldsymbol{D}}\boldsymbol{x}).
\end{equation}

\ref{p:j7yG9i-9jmL406i}:
Let $n\in\NN$ be such that $\boldsymbol{x}_n$ exists. Since by 
construction $\boldsymbol{C}\subset
H(\boldsymbol{x}_n,\boldsymbol{x}_{n+1/2})$,
it is enough to show that $\boldsymbol{C}\subset
H(\boldsymbol{x}_0,\boldsymbol{x}_n)$. This inclusion is
trivially true for $n=0$ since
$H(\boldsymbol{x}_0,\boldsymbol{x}_0)=\HHH$. 
Furthermore, it follows from \eqref{e:our-old-mor2001} and
\eqref{e:Aas7B09k20f} that
\begin{eqnarray}
\label{e:xenon}
\boldsymbol{C}\subset H(\boldsymbol{x}_0,\boldsymbol{x}_n)
&\Rightarrow&\boldsymbol{C}\subset 
H(\boldsymbol{x}_0,\boldsymbol{x}_n)\cap 
H(\boldsymbol{x}_n,\boldsymbol{x}_{n+1/2})\nonumber\\
&\Rightarrow&\boldsymbol{C}\subset 
H\big(\boldsymbol{x}_0,Q(\boldsymbol{x}_0,
\boldsymbol{x}_n,\boldsymbol{x}_{n+1/2})\big)\nonumber\\
&\Leftrightarrow&\boldsymbol{C}\subset 
H(\boldsymbol{x}_0,\boldsymbol{x}_{n+1}),
\end{eqnarray}
which establishes the assertion by induction.
This also shows that $H(\boldsymbol{x}_0,\boldsymbol{x}_n)\cap
H(\boldsymbol{x}_n,\boldsymbol{x}_{n+1/2})$ is a nonempty closed
convex set and therefore that the projection 
$\boldsymbol{x}_{n+1}$ of $\boldsymbol{x}_0$ onto it is well 
defined.

\ref{p:j7yG9i-9jmL406ii}:
Let $n\in\NN$. By construction, 
$\boldsymbol{x}_{n+1}=Q(\boldsymbol{x}_0,\boldsymbol{x}_n,
\boldsymbol{x}_{n+1/2})\in H(\boldsymbol{x}_0,\boldsymbol{x}_n)\cap
H\big(\boldsymbol{x}_n,\boldsymbol{x}_{n+1/2}\big)$. Consequently, 
since $\boldsymbol{x}_n$ is the projection of $\boldsymbol{x}_0$ 
onto $H(\boldsymbol{x}_0,\boldsymbol{x}_n)$ and 
$\boldsymbol{x}_{n+1}\in H(\boldsymbol{x}_0,\boldsymbol{x}_n)$, 
we have $\|\boldsymbol{x}_0-\boldsymbol{x}_n\|\leq
\|\boldsymbol{x}_0-\boldsymbol{x}_{n+1}\|$.
On the other hand, since $P_{\boldsymbol{C}}\boldsymbol{x}_0\in
\boldsymbol{C}\subset H(\boldsymbol{x}_0,\boldsymbol{x}_n)$, 
we have $\|\boldsymbol{x}_0-\boldsymbol{x}_n\|\leq
\|\boldsymbol{x}_0-P_{\boldsymbol{C}}\boldsymbol{x}_0\|$.
It follows that $(\|\boldsymbol{x}_0-\boldsymbol{x}_k\|)_{k\in\NN}$ 
converges and that
\begin{equation}
\label{e:KKn+=2-07g}
\lim\|\boldsymbol{x}_0-\boldsymbol{x}_k\|\leq
\|\boldsymbol{x}_0-P_{\boldsymbol{C}}\boldsymbol{x}_0\|.
\end{equation}
On the other hand, since
$\boldsymbol{x}_{n+1}\in H(\boldsymbol{x}_0,\boldsymbol{x}_n)$,
we have
\begin{equation}
\|\boldsymbol{x}_{n+1}-\boldsymbol{x}_n\|^2
\leq\|\boldsymbol{x}_{n+1}-\boldsymbol{x}_n\|^2
+2\scal{\boldsymbol{x}_{n+1}-\boldsymbol{x}_n}
{\boldsymbol{x}_n-\boldsymbol{x}_0}
=\|\boldsymbol{x}_0-\boldsymbol{x}_{n+1}\|^2-
\|\boldsymbol{x}_0-\boldsymbol{x}_n\|^2.
\end{equation}
Hence, $\sum_{k=1}^n\|\boldsymbol{x}_{k+1}-\boldsymbol{x}_k\|^2
\leq\|\boldsymbol{x}_0-\boldsymbol{x}_{n+1}\|^2\leq
\|\boldsymbol{x}_0-P_{\boldsymbol{C}}\boldsymbol{x}_0\|^2$
and, in turn, 
$\sum_{k\in\NN}\|\boldsymbol{x}_{k+1}-\boldsymbol{x}_k\|^2<\pinf$.

\ref{p:j7yG9i-9jmL406iii}:
For every $n\in\NN$, we derive from the inclusion 
$\boldsymbol{x}_{n+1}\in 
H(\boldsymbol{x}_n,\boldsymbol{x}_{n+1/2})$ that
\begin{align}
\label{e:KKn+=2-07m}
\|\boldsymbol{x}_{n+1/2}-\boldsymbol{x}_n\|^2
&\leq\|\boldsymbol{x}_{n+1}-\boldsymbol{x}_{n+1/2}\|^2+
\|\boldsymbol{x}_n-\boldsymbol{x}_{n+1/2}\|^2\nonumber\\
&\leq\|\boldsymbol{x}_{n+1}-\boldsymbol{x}_{n+1/2}\|^2+
2\scal{\boldsymbol{x}_{n+1}-\boldsymbol{x}_{n+1/2}}
{\boldsymbol{x}_{n+1/2}-\boldsymbol{x}_n}
+\|\boldsymbol{x}_n-\boldsymbol{x}_{n+1/2}\|^2\nonumber\\
&=\|\boldsymbol{x}_{n+1}-\boldsymbol{x}_n\|^2.
\end{align}
Hence, it follows from \ref{p:j7yG9i-9jmL406ii} that
$\sum_{n\in\NN}\|\boldsymbol{x}_{n+1/2}-\boldsymbol{x}_n\|^2<\pinf$. 

\ref{p:j7yG9i-9jmL406iv}:
Let us note that \eqref{e:KKn+=2-07g} implies that
$(\boldsymbol{x}_n)_{n\in\NN}$ is bounded. Now, let
$\boldsymbol{x}$ be a weak sequential cluster point of 
$(\boldsymbol{x}_n)_{n\in\NN}$, say $\boldsymbol{x}_{k_n}\weakly
\boldsymbol{x}$. Then, by weak lower semicontinuity of 
$\|\cdot\|$ \cite[Lemma~2.35]{Livre1} and \eqref{e:KKn+=2-07g}
$\|\boldsymbol{x}_0-\boldsymbol{x}\|\leq\varliminf
\|\boldsymbol{x}_0-\boldsymbol{x}_{k_n}\|\leq
\|\boldsymbol{x}_0-P_{\boldsymbol{C}}\boldsymbol{x}_0\|=
\inf_{\boldsymbol{y}\in\boldsymbol{C}}
\|\boldsymbol{x}_0-\boldsymbol{y}\|$.
Hence, since $\boldsymbol{x}\in\boldsymbol{C}$,
$\boldsymbol{x}=P_{\boldsymbol{C}}\boldsymbol{x}_0$ is the only 
weak sequential cluster point of the sequence 
$(\boldsymbol{x}_n)_{n\in\NN}$ and it follows from 
\cite[Lemma~2.38]{Livre1} that 
$\boldsymbol{x}_n\weakly P_{\boldsymbol{C}}\boldsymbol{x}_0$.
In turn \eqref{e:KKn+=2-07g} yields
$\|\boldsymbol{x}_0-P_{\boldsymbol{C}}\boldsymbol{x}_0\|\leq
\varliminf\|\boldsymbol{x}_0-\boldsymbol{x}_n\|=
\lim\|\boldsymbol{x}_0-\boldsymbol{x}_n\|
\leq\|\boldsymbol{x}_0-P_{\boldsymbol{C}}\boldsymbol{x}_0\|$.
Thus, $\boldsymbol{x}_0-\boldsymbol{x}_n\weakly
\boldsymbol{x}_0-P_{\boldsymbol{C}}\boldsymbol{x}_0$ and
$\|\boldsymbol{x}_0-\boldsymbol{x}_n\|\to
\|\boldsymbol{x}_0-P_{\boldsymbol{C}}\boldsymbol{x}_0\|$.
We therefore derive from \cite[Lemma~2.41(i)]{Livre1} that 
$\boldsymbol{x}_0-\boldsymbol{x}_n\to
\boldsymbol{x}_0-P_{\boldsymbol{C}}\boldsymbol{x}_0$, i.e., 
$\boldsymbol{x}_n\to P_{\boldsymbol{C}}\boldsymbol{x}_0$.
\end{proof}

\begin{remark}
\label{r:9i-9jmL428}
Suppose that, for some $n\in\NN$, $\boldsymbol{x}_n\in
\boldsymbol{C}$ in \eqref{e:Aas7B09k20f}. Then 
$\|\boldsymbol{x}_0-P_{\boldsymbol{C}}\boldsymbol{x}_0\|\leq
\|\boldsymbol{x}_0-\boldsymbol{x}_n\|$ and, since we always have
$\|\boldsymbol{x}_0-\boldsymbol{x}_n\|\leq\|\boldsymbol{x}_0-
P_{\boldsymbol{C}}\boldsymbol{x}_0\|$, we conclude that 
$\boldsymbol{x}_n=P_{\boldsymbol{C}}\boldsymbol{x}_0$
and that the iterations can be stopped.
\end{remark}

Algorithm \eqref{e:Aas7B09k20f} can easily be implemented thanks 
to the following lemma.

\begin{lemma}
\label{l:haugazeauy}
Let $(\boldsymbol{x},\boldsymbol{y},\boldsymbol{z})\in\HHH^3$ 
and set $\boldsymbol{R}=H(\boldsymbol{x},\boldsymbol{y})\cap
H(\boldsymbol{y},\boldsymbol{z})$.
Moreover, set
$\chi=\scal{\boldsymbol{x}-\boldsymbol{y}}{\boldsymbol{y}-
\boldsymbol{z}}$, $\mu=\|\boldsymbol{x}-\boldsymbol{y}\|^2$,
$\nu=\|\boldsymbol{y}-\boldsymbol{z}\|^2$, and
$\rho=\mu\nu-\chi^2$.
Then exactly one of the following holds:
\begin{enumerate}
\item
\label{c:haugazeaui}
$\rho=0$ and $\chi<0$, in which case $\boldsymbol{R}=\emp$.
\item 
\label{c:haugazeauii}
\emph{[}$\,\rho=0$ and $\chi\geq 0\,$\emph{]} or 
$\rho>0$, in which case $\boldsymbol{R}\neq\emp$ and 
\begin{equation}
\label{e:j7yG9i-9jmL405}
Q(\boldsymbol{x},\boldsymbol{y},\boldsymbol{z})=
\begin{cases}
\boldsymbol{z}, &\!\text{if}\;\rho=0\;\text{and}\;
\chi\geq 0;\\[+0mm]
\displaystyle
\boldsymbol{x}+(1+\chi/\nu)
(\boldsymbol{z}-\boldsymbol{y}), 
&\!\text{if}\;\rho>0\;\text{and}\;
\chi\nu\geq\rho;\\
\displaystyle \boldsymbol{y}+(\nu/\rho)
\big(\chi(\boldsymbol{x}-\boldsymbol{y})
+\mu(\boldsymbol{z}-\boldsymbol{y})\big), 
&\!\text{if}\;\rho>0\;\text{and}\;\chi\nu<\rho.
\end{cases}
\end{equation}
\end{enumerate}
\end{lemma}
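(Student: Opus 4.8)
The plan is to verify each branch of the formula directly against the projection characterization \eqref{e:our-old-mor2001}, namely that $\boldsymbol{p}=Q(\boldsymbol{x},\boldsymbol{y},\boldsymbol{z})$ as soon as $\boldsymbol{p}\in\boldsymbol{R}$ and $\scal{\boldsymbol{r}-\boldsymbol{p}}{\boldsymbol{x}-\boldsymbol{p}}\le0$ for every $\boldsymbol{r}\in\boldsymbol{R}$. I would abbreviate $\boldsymbol{a}=\boldsymbol{x}-\boldsymbol{y}$ and $\boldsymbol{b}=\boldsymbol{y}-\boldsymbol{z}$, so that $H(\boldsymbol{x},\boldsymbol{y})=\menge{\boldsymbol{h}}{\scal{\boldsymbol{h}-\boldsymbol{y}}{\boldsymbol{a}}\le0}$ and $H(\boldsymbol{y},\boldsymbol{z})=\menge{\boldsymbol{h}}{\scal{\boldsymbol{h}-\boldsymbol{z}}{\boldsymbol{b}}\le0}$ are the two half-spaces defining $\boldsymbol{R}$, with outer normals $\boldsymbol{a}$ and $\boldsymbol{b}$ and with $\mu=\|\boldsymbol{a}\|^2$, $\nu=\|\boldsymbol{b}\|^2$, $\chi=\scal{\boldsymbol{a}}{\boldsymbol{b}}$. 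The Cauchy--Schwarz inequality gives $\chi^2\le\mu\nu$, i.e.\ $\rho\ge0$, with equality precisely when $\boldsymbol{a}$ and $\boldsymbol{b}$ are colinear; hence the three conditions $[\rho=0,\chi<0]$, $[\rho=0,\chi\ge0]$, and $\rho>0$ form an exhaustive and disjoint partition, which already yields the ``exactly one'' assertion.

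For the colinear case $\rho=0$ I would argue geometrically. When in addition $\chi<0$, both $\mu>0$ and $\nu>0$ (a vanishing $\mu$ or $\nu$ would force $\chi=0$), so $\boldsymbol{a}=\lambda\boldsymbol{b}$ with $\lambda=\chi/\nu<0$; any $\boldsymbol{r}\in\boldsymbol{R}$ would then satisfy $\scal{\boldsymbol{r}-\boldsymbol{y}}{\boldsymbol{b}}\ge0$ (from $H(\boldsymbol{x},\boldsymbol{y})$, after dividing by $\lambda<0$) and $\scal{\boldsymbol{r}-\boldsymbol{z}}{\boldsymbol{b}}\le0$, and comparing these forces $\nu=\scal{\boldsymbol{y}-\boldsymbol{z}}{\boldsymbol{b}}\le0$, a contradiction; thus $\boldsymbol{R}=\emp$, which is \ref{c:haugazeaui}. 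When $\rho=0$ and $\chi\ge0$ I would check that $\boldsymbol{z}\in\boldsymbol{R}$, since $\scal{\boldsymbol{z}-\boldsymbol{z}}{\boldsymbol{b}}=0$ and $\scal{\boldsymbol{z}-\boldsymbol{y}}{\boldsymbol{a}}=-\chi\le0$, and that $\boldsymbol{x}-\boldsymbol{z}=\boldsymbol{a}+\boldsymbol{b}=(\lambda+1)\boldsymbol{b}$ is a nonnegative multiple of $\boldsymbol{b}$ when $\nu>0$; since every $\boldsymbol{r}\in\boldsymbol{R}$ obeys $\scal{\boldsymbol{r}-\boldsymbol{z}}{\boldsymbol{b}}\le0$, the inequality $\scal{\boldsymbol{r}-\boldsymbol{z}}{\boldsymbol{x}-\boldsymbol{z}}\le0$ follows and $Q=\boldsymbol{z}$. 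The residual degeneracy $\nu=0$ (so $\boldsymbol{y}=\boldsymbol{z}$) is immediate, as then $\boldsymbol{R}=H(\boldsymbol{x},\boldsymbol{y})$ and its projection of $\boldsymbol{x}$ is $\boldsymbol{y}=\boldsymbol{z}$.

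The substantive case is $\rho>0$, where $\mu>0$, $\nu>0$, and $\boldsymbol{a},\boldsymbol{b}$ are linearly independent. I would first record that $\scal{\boldsymbol{x}-\boldsymbol{z}}{\boldsymbol{b}}=\chi+\nu$, whence the projection of $\boldsymbol{x}$ onto the single half-space $H(\boldsymbol{y},\boldsymbol{z})$ equals $\boldsymbol{x}-((\chi+\nu)/\nu)\boldsymbol{b}=\boldsymbol{x}+(1+\chi/\nu)(\boldsymbol{z}-\boldsymbol{y})$, the second branch. A one-line computation gives $\scal{P_{H(\boldsymbol{y},\boldsymbol{z})}\boldsymbol{x}-\boldsymbol{y}}{\boldsymbol{a}}=(\rho-\chi\nu)/\nu$, so this point lies in $H(\boldsymbol{x},\boldsymbol{y})$---hence in $\boldsymbol{R}$, which is then nonempty---exactly when $\chi\nu\ge\rho$; because $\boldsymbol{R}\subset H(\boldsymbol{y},\boldsymbol{z})$, in that regime $Q=P_{\boldsymbol{R}}\boldsymbol{x}=P_{H(\boldsymbol{y},\boldsymbol{z})}\boldsymbol{x}$, as stated.

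It remains to treat $\rho>0$ with $\chi\nu<\rho$, where the projection onto $H(\boldsymbol{y},\boldsymbol{z})$ violates $H(\boldsymbol{x},\boldsymbol{y})$, so both constraints become active. For the candidate $Q=\boldsymbol{y}+(\nu/\rho)(\chi\boldsymbol{a}-\mu\boldsymbol{b})$ I would verify by direct expansion that $\scal{Q-\boldsymbol{y}}{\boldsymbol{a}}=(\nu/\rho)(\chi\mu-\mu\chi)=0$ and $\scal{Q-\boldsymbol{z}}{\boldsymbol{b}}=(\nu/\rho)(\chi^2-\mu\nu)+\nu=0$, so that $Q\in\boldsymbol{R}$ (and $\boldsymbol{R}\neq\emp$), and that $\boldsymbol{x}-Q=\alpha\boldsymbol{a}+\beta\boldsymbol{b}$ with $\alpha=(\rho-\chi\nu)/\rho$ and $\beta=\mu\nu/\rho$, where $\{\alpha,\beta\}\subset\RPP$ under $\chi\nu<\rho$ and $\rho>0$. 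The required inequality then follows for every $\boldsymbol{r}\in\boldsymbol{R}$ from $\scal{\boldsymbol{r}-Q}{\boldsymbol{x}-Q}=\alpha\scal{\boldsymbol{r}-\boldsymbol{y}}{\boldsymbol{a}}+\beta\scal{\boldsymbol{r}-\boldsymbol{z}}{\boldsymbol{b}}\le0$, using that $Q$ lies on both bounding hyperplanes while $\boldsymbol{r}$ lies in both half-spaces. I expect this subcase to be the main obstacle: it is the only branch where both multipliers are genuinely needed, and the algebra confirming the two orthogonality relations together with $\{\alpha,\beta\}\subset\RPP$ must be executed with care, whereas every other branch collapses to a single active constraint.
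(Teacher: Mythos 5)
Your proof is correct. Note, however, that the paper does not actually prove this lemma: its ``proof'' consists of two citations (Haugazeau's original Th\'eor\`eme~3--1 and \cite[Corollary~28.21]{Livre1}), so you have supplied in full an argument the authors outsource. What you do---reduce everything to the two normals $\boldsymbol{a}=\boldsymbol{x}-\boldsymbol{y}$, $\boldsymbol{b}=\boldsymbol{y}-\boldsymbol{z}$, use Cauchy--Schwarz to get the exhaustive disjoint trichotomy on $(\rho,\chi)$, and then verify each candidate against the variational characterization \eqref{e:our-old-mor2001}---is essentially the standard derivation behind the cited results, and all the key computations check out: $\scal{Q-\boldsymbol{y}}{\boldsymbol{a}}=0$, $\scal{Q-\boldsymbol{z}}{\boldsymbol{b}}=(\nu/\rho)(\chi^2-\mu\nu)+\nu=0$, and $\boldsymbol{x}-Q=\alpha\boldsymbol{a}+\beta\boldsymbol{b}$ with $\alpha=(\rho-\chi\nu)/\rho$ and $\beta=\mu\nu/\rho$ both strictly positive in the third branch, which is indeed the only place where both constraints are active. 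One cosmetic caveat: your formula $\boldsymbol{x}-((\chi+\nu)/\nu)\boldsymbol{b}$ is the projection onto $H(\boldsymbol{y},\boldsymbol{z})$ only when $\chi+\nu\geq 0$, so the phrase ``exactly when $\chi\nu\geq\rho$'' is slightly overstated as a characterization; but since $\chi\nu\geq\rho>0$ forces $\chi>0$, the formula is valid precisely where you invoke it, and the case $\chi\nu<\rho$ is handled independently by direct verification, so no gap results. Your handling of the degenerate subcases ($\mu=0$ or $\nu=0$ forcing $\chi=0$, and $\nu=0$ collapsing $\boldsymbol{R}$ to a single half-space) is also sound.
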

\begin{proof}
See \cite[Th\'eor\`eme~3-1]{Haug68} for the original proof and 
\cite[Corollary~28.21]{Livre1} for an alternate derivation.
\end{proof}

\section{Main result}
\label{sec:3}

In this section, we devise a strongly convergent algorithm for
solving Problem~\ref{prob:11} by coupling
Proposition~\ref{p:j7yG9i-9jmL406} with the 
construction of \cite{Genn13} to determine the half-spaces 
$(H(\boldsymbol{x}_n,\boldsymbol{x}_{n+1/2}))_{n\in\NN}$. 
First, we need a couple of facts. 

\begin{proposition}{\rm \cite[Proposition~2.8]{Siop11}}
\label{p:0kkhUj710-31z}
In the setting of Problem~\ref{prob:11}, 
$\boldsymbol{Z}$ is a nonempty closed convex set and, if 
$(x,v^*)\in\boldsymbol{Z}$, then $x$ solves \eqref{e:primal} and 
$v^*$ solves \eqref{e:dual}.
\end{proposition}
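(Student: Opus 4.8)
The plan is to recognize $\boldsymbol{Z}$ as the set of zeros of a single maximally monotone operator on $\KKK$, which yields closedness and convexity at once, and then to settle nonemptiness and the primal/dual assertions by direct manipulation of the two defining inclusions.

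First I would introduce the operator $\boldsymbol{M}\colon\KKK\to 2^{\KKK}$ given by $\boldsymbol{M}(x,v^*)=(Ax+L^*v^*)\times(B^{-1}v^*-Lx)$, that is $\boldsymbol{M}=\boldsymbol{A}+\boldsymbol{S}$, where $\boldsymbol{A}\colon(x,v^*)\mapsto Ax\times B^{-1}v^*$ and $\boldsymbol{S}\colon(x,v^*)\mapsto(L^*v^*,-Lx)$. Unpacking $0\in\boldsymbol{M}(x,v^*)$ gives precisely $-L^*v^*\in Ax$ together with $Lx\in B^{-1}v^*$, so that $\boldsymbol{Z}=\zer\boldsymbol{M}$. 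To obtain maximal monotonicity I would argue that $\boldsymbol{A}$ is maximally monotone as the direct sum of the maximally monotone operators $A$ and $B^{-1}$, while $\boldsymbol{S}$ is bounded, linear, and skew, since $\scal{\boldsymbol{S}(x,v^*)}{(x,v^*)}=\scal{L^*v^*}{x}-\scal{Lx}{v^*}=0$; in particular $\boldsymbol{S}$ is monotone, single-valued, continuous, and everywhere defined, i.e. $\dom\boldsymbol{S}=\KKK$. The standard sum theorem (applicable because one summand is continuous with full domain) then makes $\boldsymbol{M}=\boldsymbol{A}+\boldsymbol{S}$ maximally monotone, and since the zero set of a maximally monotone operator is closed and convex, $\boldsymbol{Z}$ is closed and convex.

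For the remaining claims I would work directly with the inclusions. Nonemptiness follows from the standing hypothesis that \eqref{e:primal} has a solution: from $0\in Ax+L^*BLx$ one extracts $v^*\in B(Lx)$ with $-L^*v^*\in Ax$, and then $-L^*v^*\in Ax$ and $Lx\in B^{-1}v^*$ show that $(x,v^*)\in\boldsymbol{Z}$. Conversely, given $(x,v^*)\in\boldsymbol{Z}$, the relation $v^*\in B(Lx)$ gives $L^*v^*\in L^*BLx$, so that adding $-L^*v^*\in Ax$ produces $0\in Ax+L^*BLx$ and $x$ solves \eqref{e:primal}; likewise $-L^*v^*\in Ax$ means $x\in A^{-1}(-L^*v^*)$, hence $-Lx\in -LA^{-1}(-L^*v^*)$, and adding $Lx\in B^{-1}v^*$ yields $0\in -LA^{-1}(-L^*v^*)+B^{-1}v^*$, so $v^*$ solves \eqref{e:dual}.

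The only step that is not purely algebraic bookkeeping is the maximal monotonicity of $\boldsymbol{A}+\boldsymbol{S}$, and it is there that care is needed: one must invoke the correct version of the sum theorem, whose hypotheses are met precisely because $\boldsymbol{S}$ is everywhere defined and continuous, and one relies on the structural fact that zeros of a maximally monotone operator form a closed convex set. Everything else reduces to reading off the two inclusions that define $\boldsymbol{Z}$.
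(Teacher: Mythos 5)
Your argument is correct, and it is essentially the proof of the cited source: the paper itself gives no proof but defers to \cite[Proposition~2.8]{Siop11}, where $\boldsymbol{Z}$ is likewise realized as $\zer(\boldsymbol{A}+\boldsymbol{S})$ with $\boldsymbol{A}\colon(x,v^*)\mapsto Ax\times B^{-1}v^*$ maximally monotone and $\boldsymbol{S}$ the bounded linear skew coupling operator, so that closedness and convexity follow from maximal monotonicity of the sum, while nonemptiness and the primal/dual assertions are read off the two defining inclusions exactly as you do.
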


\begin{proposition}{\rm \cite[Proposition~2.5]{Genn13}}
\label{p:genna3Hbl-915}
In the setting of Problem~\ref{prob:11}, let $(a_n,a_n^*)_{n\in\NN}$
be a sequence in $\gra A$, let $(b_n,b_n^*)_{n\in\NN}$ be a 
sequence in $\gra B$, and let $(x,v^*)\in\KKK$. Suppose that 
$a_n\weakly{x}$, $b^*_n\weakly{v}^*$, $a^*_n+L^*b^*_n\to 0$, 
and $La_n-b_n\to 0$. Then $\scal{a_n}{a_n^*}+\scal{b_n}{b_n^*}\to 0$
and $(x,v^*)\in\boldsymbol{Z}$. 
\end{proposition}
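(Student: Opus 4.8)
The plan is to prove the two conclusions in sequence: first the scalar convergence $\scal{a_n}{a_n^*}+\scal{b_n}{b_n^*}\to 0$, and then the membership $(x,v^*)\in\boldsymbol{Z}$, which amounts to verifying the two inclusions $-L^*v^*\in Ax$ and $Lx\in B^{-1}v^*$ defining the set in \eqref{e:Aas7B09k21a}. The natural tool for both parts is the monotonicity of $A$ and $B$, combined with the fact that a maximally monotone operator has a weakly-strongly sequentially closed graph in the appropriate sense, i.e.\ if $a_n\weakly x$ and $a_n^*\to u^*$ (or a mixed weak/strong convergence compatible with a monotonicity inequality) and the duality products behave correctly, then $u^*\in Ax$.

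For the scalar limit, the key algebraic identity is
\begin{equation}
\scal{a_n}{a_n^*}+\scal{b_n}{b_n^*}
=\scal{a_n}{a_n^*+L^*b_n^*}+\scal{La_n-b_n}{b_n^*},
\end{equation}
obtained by expanding $\scal{a_n}{L^*b_n^*}=\scal{La_n}{b_n^*}$ and regrouping. The first term on the right tends to $0$ because $a_n^*+L^*b_n^*\to 0$ strongly while $(a_n)_{n\in\NN}$ is bounded (being weakly convergent), and the second term tends to $0$ because $La_n-b_n\to 0$ strongly while $(b_n^*)_{n\in\NN}$ is bounded (being weakly convergent). This settles the first assertion.

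For the membership, the plan is to pass to the limit in the monotonicity inequalities. I would take an arbitrary $(p,p^*)\in\gra A$ and $(q,q^*)\in\gra B$ and write the monotonicity inequalities $\scal{a_n-p}{a_n^*-p^*}\geq 0$ and $\scal{b_n-q}{b_n^*-q^*}\geq 0$; summing them and using the identity above to rewrite the cross terms $\scal{a_n}{a_n^*}+\scal{b_n}{b_n^*}$, the only genuinely nonconvergent pieces are the inner products $\scal{a_n}{-p^*}+\scal{b_n}{-q^*}$ and $\scal{p}{a_n^*}+\scal{q}{b_n^*}$, which converge by the weak convergence of $a_n,b_n$ and the strong/weak convergence hypotheses together with $a_n^*\weakly -L^*v^*$ (deduced from $a_n^*+L^*b_n^*\to 0$ and $b_n^*\weakly v^*$). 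Taking $\varliminf$ and using the scalar convergence just established yields an inequality of the form $\scal{x-p}{-L^*v^*-p^*}+\scal{Lx-q}{v^*-q^*}\geq 0$ valid for all $(p,p^*)\in\gra A$ and $(q,q^*)\in\gra B$; by maximal monotonicity of $A$ and $B$ this forces $-L^*v^*\in Ax$ and $v^*\in BLx$, i.e.\ $Lx\in B^{-1}v^*$. The main obstacle is the bookkeeping in the joint limiting inequality: one must verify that every cross term either converges outright or is absorbed by the scalar limit $\scal{a_n}{a_n^*}+\scal{b_n}{b_n^*}\to 0$, and in particular that no term requires strong convergence of the primal variables $a_n,b_n$, which we do not have. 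Since this is exactly the content of the cited result \cite[Proposition~2.5]{Genn13}, I would present the identity and the limiting argument and refer to that source for the detailed passage to the limit.
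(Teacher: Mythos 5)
The paper does not prove this proposition; it simply cites \cite[Proposition~2.5]{Genn13}, whose argument packages the data into the Kuhn--Tucker operator $\boldsymbol{M}\colon(y,w^*)\mapsto(Ay+L^*w^*)\times(B^{-1}w^*-Ly)$ on $\KKK$: one checks that $\boldsymbol{u}_n=(a_n,b_n^*)$ and $\boldsymbol{u}_n^*=(a_n^*+L^*b_n^*,\,b_n-La_n)$ satisfy $(\boldsymbol{u}_n,\boldsymbol{u}_n^*)\in\gra\boldsymbol{M}$, $\boldsymbol{u}_n\weakly(x,v^*)$, $\boldsymbol{u}_n^*\to 0$, and $\scal{\boldsymbol{u}_n}{\boldsymbol{u}_n^*}=\scal{a_n}{a_n^*}+\scal{b_n}{b_n^*}$, then invokes the maximal monotonicity of $\boldsymbol{M}$ and the sequential closedness of its graph in $\KKK_{\mathrm{weak}}\times\KKK_{\mathrm{strong}}$. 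Your first part is this same computation done by hand and is fine, except for a sign: the correct identity is $\scal{a_n}{a_n^*}+\scal{b_n}{b_n^*}=\scal{a_n}{a_n^*+L^*b_n^*}-\scal{La_n-b_n}{b_n^*}$ (your $+$ would give $\scal{a_n}{a_n^*}+2\scal{La_n}{b_n^*}-\scal{b_n}{b_n^*}$). This does not affect the conclusion, since both terms still vanish by bounded-times-strongly-null.

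The real issue is your last step. The limiting inequality you obtain, $\scal{x-p}{-L^*v^*-p^*}+\scal{Lx-q}{v^*-q^*}\geq 0$ for all $(p,p^*)\in\gra A$ and $(q,q^*)\in\gra B$, cannot be split into the two separate inequalities that maximal monotonicity of $A$ and of $B$ individually would require: neither summand need be nonnegative on its own, and you cannot choose $(q,q^*)$ to annihilate the second term without already knowing $v^*\in B(Lx)$. The correct conclusion is reached by observing that this summed inequality says precisely that $\big((x,v^*),(0,0)\big)$ is monotonically related to $\gra\boldsymbol{M}$; since $\boldsymbol{M}$ is the sum of the maximally monotone operator $(y,w^*)\mapsto Ay\times B^{-1}w^*$ and the everywhere-defined skew bounded linear operator $(y,w^*)\mapsto(L^*w^*,-Ly)$, it is maximally monotone, whence $(0,0)\in\boldsymbol{M}(x,v^*)$, i.e.\ $(x,v^*)\in\boldsymbol{Z}=\zer\boldsymbol{M}$. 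With that substitution (and noting explicitly that $b_n\weakly Lx$ and $a_n^*\weakly -L^*v^*$, which you need for the cross terms), your argument is a correct, self-contained reconstruction of the cited proof.
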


The next result features our general algorithm for solving 
Problem~\ref{prob:11}.

\begin{theorem}
\label{t:9i-9jmL402}
Consider the setting of Problem~\ref{prob:11}. Let 
$\varepsilon\in\zeroun$, let $\alpha\in\RPP$, and set, for every
$(x,v^*)\in\KKK$, 
\begin{multline}
\label{e:9g45g2h29a}
\boldsymbol{G}_\alpha(x,v^*)=
\Big\{(a,b,a^*,b^*)\in\KKK\times\KKK\;\big |\;
(a,a^*)\in\gra A,\;(b,b^*)\in\gra B,\;\text{and}\\
\scal{x-a}{a^*+L^*v^*}+\scal{Lx-b}{b^*-v^*}
\geq\alpha\big(\|a^*+L^*b^*\|^2+\|La-b\|^2\big)\Big\}.
\end{multline}
Iterate
\begin{equation}
\label{e:j7yG9i-9jmL409h}
\begin{array}{l}
\text{for}\;n=0,1,\ldots\\
\left\lfloor
\begin{array}{l}
(a_n,b_n,a_n^*,b_n^*)\in\boldsymbol{G}_\alpha(x_n,v_n^*)\\
s^*_n=a^*_n+L^*b^*_n\\
t_n=b_n-La_n\\
\tau_n=\|s_n^*\|^2+\|t_n\|^2\\
\text{if}\;\tau_n=0\\
\left\lfloor
\begin{array}{l}
\theta_n=0\\
\end{array}
\right.\\
\text{if}\;\tau_n>0\\
\left\lfloor
\begin{array}{l}
\lambda_n\in\left[\varepsilon,1\right]\\
\theta_n=\lambda_n\big(\scal{x_n}{s^*_n}+\scal{t_n}{v^*_n}
-\scal{a_n}{a^*_n}-\scal{b_n}{b^*_n}\big)/\tau_n\\
\end{array}
\right.\\
x_{n+1/2}=x_n-\theta_n s^*_n\\
v^*_{n+1/2}=v^*_n-\theta_n t_n\\
\chi_n=\scal{x_0-x_n}{x_n-x_{n+1/2}}
+\scal{v_0^*-v_n^*}{v_n^*-v_{n+1/2}^*}\\
\mu_n=\|x_0-x_n\|^2+\|v_0^*-v_n^*\|^2\\
\nu_n=\|x_n-x_{n+1/2}\|^2+\|v_n^*-v_{n+1/2}^*\|^2\\
\rho_n=\mu_n\nu_n-\chi_n^2\\
\text{if}\;\rho_n=0\;\text{and}\;\chi_n\geq 0\\
\left\lfloor
\begin{array}{l}
x_{n+1}=x_{n+1/2}\\
v^*_{n+1}=v_{n+1/2}^*
\end{array}
\right.\\
\text{if}\;\rho_n>0\;\text{and}\;\chi_n\nu_n\geq\rho_n\\
\left\lfloor
\begin{array}{l}
x_{n+1}=x_0+(1+\chi_n/\nu_n)(x_{n+1/2}-x_n)\\
v^*_{n+1}=v_0^*+(1+\chi_n/\nu_n)(v_{n+1/2}^*-v_n^*)
\end{array}
\right.\\
\text{if}\;\rho_n>0\;\text{and}\;\chi_n\nu_n<\rho_n\\
\left\lfloor
\begin{array}{l}
x_{n+1}=x_n+(\nu_n/\rho_n)\big(\chi_n(x_0-x_n)
+\mu_n(x_{n+1/2}-x_n)\big)\\
v^*_{n+1}=v_n^*+(\nu_n/\rho_n)\big(\chi_n(v_0^*-v_n^*)
+\mu_n(v_{n+1/2}^*-v_n^*)\big).
\end{array}
\right.\\
\end{array}
\right.\\
\end{array}
\end{equation}
Then \eqref{e:j7yG9i-9jmL409h} generates infinite sequences 
$(x_n)_{n\in\NN}$ and $(v_n^*)_{n\in\NN}$, and the following
hold:
\begin{enumerate}
\item
\label{t:9i-9jmL402i}
$\sum_{n\in\NN}\|x_{n+1}-x_n\|^2<\pinf$ and 
$\sum_{n\in\NN}\|v^*_{n+1}-v^*_n\|^2<\pinf$.
\item
\label{t:9i-9jmL402ii}
$\sum_{n\in\NN}\|s^*_n\|^2<\pinf$ and 
$\sum_{n\in\NN}\|t_n\|^2<\pinf$.
\item
\label{t:9i-9jmL402iii}
Suppose that $x_n-a_n\weakly 0$ and $v_n^*-b_n^*\weakly 0$. Then 
$x_n\to\overline{x}$ and $v_n^*\to\overline{v}^*$.
\end{enumerate}
\end{theorem}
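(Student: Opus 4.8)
The plan is to recognize \eqref{e:j7yG9i-9jmL409h} as a concrete realization of the abstract Haugazeau recursion \eqref{e:Aas7B09k20f} in the product space $\HHH=\KKK$, applied to $\boldsymbol{C}=\boldsymbol{Z}$. Setting $\boldsymbol{x}_n=(x_n,v_n^*)$, $\boldsymbol{x}_{n+1/2}=(x_{n+1/2},v_{n+1/2}^*)$, and $\boldsymbol{x}_0=(x_0,v_0^*)$, I would first check that $\boldsymbol{x}_{n+1/2}-\boldsymbol{x}_n=-\theta_n(s_n^*,t_n)$ and that the scalars $\chi_n,\mu_n,\nu_n,\rho_n$ appearing in \eqref{e:j7yG9i-9jmL409h} are exactly those produced by Lemma~\ref{l:haugazeauy} for the triplet $(\boldsymbol{x}_0,\boldsymbol{x}_n,\boldsymbol{x}_{n+1/2})$; the three branches of the update then coincide with \eqref{e:j7yG9i-9jmL405}, so that $\boldsymbol{x}_{n+1}=Q(\boldsymbol{x}_0,\boldsymbol{x}_n,\boldsymbol{x}_{n+1/2})$. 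By Proposition~\ref{p:0kkhUj710-31z}, $\boldsymbol{Z}$ is nonempty, closed, and convex, which is the standing hypothesis of Proposition~\ref{p:j7yG9i-9jmL406}.

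The crux is to verify the requirement of \eqref{e:Aas7B09k20f}, namely $\boldsymbol{Z}\subset H(\boldsymbol{x}_n,\boldsymbol{x}_{n+1/2})$. First I would record that the bracket defining $\theta_n$, say $\pi_n=\scal{x_n}{s_n^*}+\scal{t_n}{v_n^*}-\scal{a_n}{a_n^*}-\scal{b_n}{b_n^*}$, coincides with the left-hand side of the inequality defining $\boldsymbol{G}_\alpha(x_n,v_n^*)$ in \eqref{e:9g45g2h29a}: expanding $s_n^*=a_n^*+L^*b_n^*$ and $t_n=b_n-La_n$ and moving $L^*$ across the pairing gives $\pi_n=\scal{x_n-a_n}{a_n^*+L^*v_n^*}+\scal{Lx_n-b_n}{b_n^*-v_n^*}\geq\alpha\tau_n$, whence $\theta_n\tau_n=\lambda_n\pi_n$ and $\theta_n\geq\varepsilon\alpha$ whenever $\tau_n>0$. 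Next, fixing $(x,v^*)\in\boldsymbol{Z}$, so that $(x,-L^*v^*)\in\gra A$ and $(Lx,v^*)\in\gra B$, I would add the monotonicity inequalities $\scal{a_n-x}{a_n^*+L^*v^*}\geq 0$ and $\scal{b_n-Lx}{b_n^*-v^*}\geq 0$; transferring $L^*$ once more makes the cross terms in $v^*$ collapse to $-\scal{t_n}{v^*}$, and a short rearrangement yields $\scal{x-x_n}{s_n^*}+\scal{v^*-v_n^*}{t_n}\leq-\pi_n\leq-\theta_n\tau_n$. Since membership in $H(\boldsymbol{x}_n,\boldsymbol{x}_{n+1/2})$ amounts precisely to $\scal{x-x_n}{s_n^*}+\scal{v^*-v_n^*}{t_n}+\theta_n\tau_n\leq 0$ (and $H(\boldsymbol{x}_n,\boldsymbol{x}_n)=\KKK$ when $\tau_n=0$), this establishes $\boldsymbol{Z}\subset H(\boldsymbol{x}_n,\boldsymbol{x}_{n+1/2})$. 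Proposition~\ref{p:j7yG9i-9jmL406}\ref{p:j7yG9i-9jmL406i} then guarantees that the set onto which $\boldsymbol{x}_0$ is projected contains $\boldsymbol{Z}\neq\emp$ at every step, so the projections are well defined and the sequences are infinite.

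With this identification, \ref{t:9i-9jmL402i} is immediate from Proposition~\ref{p:j7yG9i-9jmL406}\ref{p:j7yG9i-9jmL406ii}, since $\|\boldsymbol{x}_{n+1}-\boldsymbol{x}_n\|^2=\|x_{n+1}-x_n\|^2+\|v_{n+1}^*-v_n^*\|^2$. For \ref{t:9i-9jmL402ii} I would invoke Proposition~\ref{p:j7yG9i-9jmL406}\ref{p:j7yG9i-9jmL406iii}: because $\|\boldsymbol{x}_{n+1/2}-\boldsymbol{x}_n\|^2=\theta_n^2\tau_n$, the lower bound $\theta_n\geq\varepsilon\alpha$ (valid when $\tau_n>0$, and with $\tau_n=0$ contributing nothing) yields $\tau_n\leq(\varepsilon\alpha)^{-2}\|\boldsymbol{x}_{n+1/2}-\boldsymbol{x}_n\|^2$, so that $\sum_{n\in\NN}\tau_n=\sum_{n\in\NN}(\|s_n^*\|^2+\|t_n\|^2)<\pinf$.

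Finally, for \ref{t:9i-9jmL402iii} I would check the weak-cluster hypothesis of Proposition~\ref{p:j7yG9i-9jmL406}\ref{p:j7yG9i-9jmL406iv}. From \ref{t:9i-9jmL402ii} we get $s_n^*=a_n^*+L^*b_n^*\to 0$ and $t_n=b_n-La_n\to 0$. If $(\boldsymbol{x}_{k_n})_{n\in\NN}$ converges weakly to $(x,v^*)$, then combining with the assumptions $x_n-a_n\weakly 0$ and $v_n^*-b_n^*\weakly 0$ gives $a_{k_n}\weakly x$ and $b_{k_n}^*\weakly v^*$, so Proposition~\ref{p:genna3Hbl-915} places $(x,v^*)\in\boldsymbol{Z}$. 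Hence Proposition~\ref{p:j7yG9i-9jmL406}\ref{p:j7yG9i-9jmL406iv} yields $\boldsymbol{x}_n\to P_{\boldsymbol{Z}}\boldsymbol{x}_0=(\overline{x},\overline{v}^*)$, and componentwise convergence in $\KKK$ gives $x_n\to\overline{x}$ and $v_n^*\to\overline{v}^*$. I expect the main obstacle to be the half-space computation of the second paragraph: the bookkeeping with $L^*$ must be arranged so that the terms in $v^*$ cancel exactly and the $\boldsymbol{G}_\alpha$ inequality converts into the lower bound $\theta_n\geq\varepsilon\alpha$ that ultimately drives both \ref{t:9i-9jmL402ii} and \ref{t:9i-9jmL402iii}.
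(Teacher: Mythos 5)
Your proposal is correct and follows essentially the same route as the paper's proof: identify \eqref{e:j7yG9i-9jmL409h} as an instance of \eqref{e:Aas7B09k20f} via Lemma~\ref{l:haugazeauy}, use the identity $\scal{x}{s_n^*}+\scal{t_n}{v^*}-\scal{a_n}{a_n^*}-\scal{b_n}{b_n^*}=\scal{x-a_n}{a_n^*+L^*v^*}+\scal{Lx-b_n}{b_n^*-v^*}$ together with monotonicity of $A$ and $B$ and the $\boldsymbol{G}_\alpha$ inequality to get $\boldsymbol{Z}\subset H(\boldsymbol{x}_n,\boldsymbol{x}_{n+1/2})$ and $\theta_n\geq\varepsilon\alpha$, and then invoke Proposition~\ref{p:j7yG9i-9jmL406}\ref{p:j7yG9i-9jmL406ii}--\ref{p:j7yG9i-9jmL406iv} and Proposition~\ref{p:genna3Hbl-915}. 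The rearrangement in your second paragraph, including the slack $-(1-\lambda_n)\pi_n\leq 0$ implicit in passing from $-\pi_n$ to $-\theta_n\tau_n$, matches the paper's computation in \eqref{e:9g45g2h30}.
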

\begin{proof}
We are going to show that the claims follow from
Proposition~\ref{p:j7yG9i-9jmL406} applied in $\KKK$ to the
set $\boldsymbol{Z}$ of \eqref{e:Aas7B09k21a}, which is nonempty, 
closed, and convex by Proposition~\ref{p:0kkhUj710-31z}. First, 
let us set
\begin{equation}
\label{e:j7yG9i-9jmL411b}
(\forall n\in\NN)\quad
\boldsymbol{x}_n=(x_n,v_n^*)\quad\text{and}\quad
\boldsymbol{x}_{n+1/2}=(x_{n+1/2},v_{n+1/2}^*).
\end{equation}
We deduce from \eqref{e:j7yG9i-9jmL409h} that 
\begin{align}
\label{e:j7yG9i-9jmL410x}
&\hskip -4mm
(\forall (x,v^*)\in\KKK)(\forall n\in\NN)\quad
\scal{x}{s^*_n}+\scal{t_n}{v^*}-\scal{a_n}{a^*_n}-
\scal{b_n}{b^*_n}\nonumber\\
&\hskip 44mm =\scal{x}{a^*_n+L^*b^*_n}+\scal{b_n-La_n}{v^*}-
\scal{a_n}{a^*_n}-\scal{b_n}{b^*_n}\nonumber\\
&\hskip 44mm =\scal{x-a_n}{a_n^*+L^*v^*}+
\scal{Lx-b_n}{b_n^*-v^*}.
\end{align}
Next, let us show that 
\begin{equation}
\label{e:j7yG9i-9jmL411u}
(\forall n\in\NN)\quad\boldsymbol{Z}\subset
H\big(\boldsymbol{x}_n,\boldsymbol{x}_{n+1/2}\big).
\end{equation}
To this end, let $\boldsymbol{z}=(x,v^*)\in\boldsymbol{Z}$ and let 
$n\in\NN$. We must show that
$\scal{\boldsymbol{z}-\boldsymbol{x}_{n+1/2}}{\boldsymbol{x}_{n}-
\boldsymbol{x}_{n+1/2}}\leq 0$. If $\tau_n=0$, then
$\boldsymbol{x}_{n+1/2}=\boldsymbol{x}_{n}$ and the inequality
is trivially satisfied. Now suppose that $\tau_n>0$.
Then \eqref{e:j7yG9i-9jmL410x} and
\eqref{e:9g45g2h29a} yield
\begin{align}
\label{e:9i-9jmL417}
\theta_n
&=\lambda_n\frac{\scal{x_n}{s^*_n}+\scal{t_n}{v^*_n}
-\scal{a_n}{a^*_n}-\scal{b_n}{b^*_n}}{\tau_n}\nonumber\\
&=\lambda_n\frac{\scal{x_n-a_n}{a_n^*+L^*v_n^*}+
\scal{Lx_n-b_n}{b_n^*-v_n^*}}{\tau_n}\nonumber\\
&\geq\varepsilon\alpha\nonumber\\
&>0.
\end{align}
On the other hand, it follows from \eqref{e:j7yG9i-9jmL409h} 
and \eqref{e:Aas7B09k21a} that $a_n^*\in Aa_n$ and 
$-L^*v^*\in Ax$. Hence, since $A$ is monotone, 
$\scal{x-a_n}{a_n^*+L^*v^*}\leq 0$. 
Similarly, since $v^*\in B(Lx)$ and $b_n^*\in Bb_n$, the 
monotonicity of $B$ implies that $\scal{Lx-b_n}{b_n^*-v^*}\leq 0$.
Consequently, we derive from \eqref{e:j7yG9i-9jmL409h},
\eqref{e:j7yG9i-9jmL410x}, and \eqref{e:9g45g2h29a} that 
\begin{align}
\label{e:9g45g2h30}
&\hskip -6mm
\scal{\boldsymbol{z}-\boldsymbol{x}_{n+1/2}}{\boldsymbol{x}_{n}-
\boldsymbol{x}_{n+1/2}}/\theta_n\nonumber\\
&=\scal{\boldsymbol{z}}{\boldsymbol{x}_n-\boldsymbol{x}_{n+1/2}}/
\theta_n+\scal{\boldsymbol{x}_{n+1/2}}
{\boldsymbol{x}_{n+1/2}-\boldsymbol{x}_n}/\theta_n\nonumber\\
&=\scal{x}{x_n-x_{n+1/2}}/\theta_n+
\scal{v^*}{v_n^*-v_{n+1/2}^*}/\theta_n\nonumber\\
&\quad\;+\scal{x_{n+1/2}}{x_{n+1/2}-x_n}/\theta_n+
\scal{v^*_{n+1/2}}{v^*_{n+1/2}-v^*_n}/\theta_n\nonumber\\
&=\scal{x}{s_n^*}+\scal{t_n}{v^*}-\scal{x_n}{s_n^*}-
\scal{t_n}{v^*_n}+\theta_n\big(\|s_n^*\|^2+\|t_n\|^2\big)
\nonumber\\
&=\scal{x}{s_n^*}+\scal{t_n}{v^*}
-\scal{x_n}{s_n^*}-\scal{t_n}{v^*_n}+
\lambda_n\big(\scal{x_n}{s^*_n}+\scal{t_n}{v^*_n}
-\scal{a_n}{a^*_n}-\scal{b_n}{b^*_n}\big)
\nonumber\\
&=\scal{x}{s_n^*}+\scal{t_n}{v^*}
-\scal{a_n}{a_n^*}-\scal{b_n}{b_n^*}\nonumber\\
&\quad\;-(1-\lambda_n)\big(\scal{x_n}{s^*_n}+\scal{t_n}{v^*_n}
-\scal{a_n}{a_n^*}-\scal{b_n}{b_n^*}\big)\nonumber\\
&=\scal{x-a_n}{a_n^*+L^*v^*}+\scal{Lx-b_n}{b_n^*-v^*}\nonumber\\
&\quad\;-(1-\lambda_n)\big(\scal{x_n-a_n}{a_n^*+L^*v_n^*}+
\scal{Lx_n-b_n}{b_n^*-v_n^*}\big)\nonumber\\
&\leq\scal{x-a_n}{a_n^*+L^*v^*}+\scal{Lx-b_n}{b_n^*-v^*}
-\alpha(1-\lambda_n)\big(\|a_n^*+L^*b_n^*\|^2+
\|La_n-b_n\|^2\big)\nonumber\\
&\leq\scal{x-a_n}{a_n^*+L^*v^*}+\scal{Lx-b_n}{b_n^*-v^*}\nonumber\\
&\leq 0.
\end{align}
This verifies \eqref{e:j7yG9i-9jmL411u}. It therefore follows 
from \eqref{e:j7yG9i-9jmL405} that 
\eqref{e:j7yG9i-9jmL409h} is an instance of 
\eqref{e:Aas7B09k20f}.

\ref{t:9i-9jmL402i}:
It follows from \eqref{e:j7yG9i-9jmL411b} and 
Proposition~\ref{p:j7yG9i-9jmL406}\ref{p:j7yG9i-9jmL406ii}
that $\sum_{n\in\NN}\|x_{n+1}-x_n\|^2+
\sum_{n\in\NN}\|v^*_{n+1}-v^*_n\|^2=
\sum_{n\in\NN}\|\boldsymbol{x}_{n+1}-
\boldsymbol{x}_n\|^2<\pinf$.

\ref{t:9i-9jmL402ii}:
Let $n\in\NN$. We consider two cases.
\begin{itemize}
\item
$\tau_n=0$: Then \eqref{e:j7yG9i-9jmL409h} yields
$\|s^*_n\|^2+\|t_n\|^2=0=\|\boldsymbol{x}_{n+1/2}-
\boldsymbol{x}_n\|^2/(\alpha\varepsilon)^2$.
\item
$\tau_n>0$: Then
it follows from \eqref{e:9g45g2h29a} and 
\eqref{e:j7yG9i-9jmL409h} that 
\begin{align}
\label{e:j7yG9i-9jmL409s}
\|s^*_n\|^2+\|t_n\|^2
&=\tau_n\nonumber\\
&\leq\frac{\big(\scal{x_n-a_n}{a_n^*+L^*v_n^*}+
\scal{Lx_n-b_n}{b_n^*-v_n^*}\big)^2}{\alpha^2\tau_n}\nonumber\\
&=\frac{\big(\scal{x_n}{s^*_n}+\scal{t_n}{v^*_n}-\scal{a_n}{a^*_n}
-\scal{b_n}{b^*_n}\big)^2}{\alpha^2\tau_n}\nonumber\\
&\leq\frac{\lambda_n^2
\big(\scal{x_n}{s^*_n}+\scal{t_n}{v^*_n}-\scal{a_n}{a^*_n}-
\scal{b_n}{b^*_n}\big)^2}{\alpha^2\varepsilon^2\tau_n}\nonumber\\
&=\frac{\theta_n^2\tau_n}{\alpha^2\varepsilon^2}\nonumber\\
&=\frac{\|x_{n+1/2}-x_n\|^2+\|v^*_{n+1/2}-v^*_n\|^2}
{\alpha^2\varepsilon^2}\nonumber\\
&=\frac{\|\boldsymbol{x}_{n+1/2}-\boldsymbol{x}_n\|^2}
{\alpha^2\varepsilon^2}.
\end{align}
\end{itemize}
Altogether, it follows from 
Proposition~\ref{p:j7yG9i-9jmL406}\ref{p:j7yG9i-9jmL406iii}
that $\sum_{n\in\NN}\|s^*_n\|^2+\sum_{n\in\NN}\|t_n\|^2<\pinf$.

\ref{t:9i-9jmL402iii}:
Take $x\in\HH$, $v^*\in\GG$, and a strictly increasing sequence 
$(k_n)_{n\in\NN}$ in $\NN$, such that $x_{k_n}\weakly x$ and 
$v^*_{k_n}\weakly v^*$. We derive from \ref{t:9i-9jmL402ii} and
\eqref{e:j7yG9i-9jmL409h} that $a^*_n+L^*b^*_n\to 0$ and 
$La_n-b_n\to 0$. Hence, the assumptions yield
\begin{equation}
\label{e:j7yG9i-9jmL411a}
a_{k_n}\weakly{x},\quad
b^*_{k_n}\weakly{v^*},\quad
a^*_{k_n}+L^*b^*_{k_n}\to 0,\quad\text{and}\quad
La_{k_n}-b_{k_n}\to 0.
\end{equation}
On the other hand, \eqref{e:9g45g2h29a} also asserts that
$(\forall n\in\NN)$ $(a_n,a_n^*)\in\gra A$ and 
$(b_n,b_n^*)\in\gra B$. Altogether, 
Proposition~\ref{p:genna3Hbl-915} implies that 
$(x,v^*)\in\boldsymbol{Z}$. In view of 
Proposition~\ref{p:j7yG9i-9jmL406}\ref{p:j7yG9i-9jmL406iv},
the proof is complete.
\end{proof}

\begin{remark}
\label{r:9i-9jmL419}
Here are a few observations pertaining to 
Theorem~\ref{t:9i-9jmL402}.
\begin{enumerate}
\item
These results appear to provide the first algorithmic framework for 
composite inclusions problems that does not require additional 
assumptions on the constituents of the problem to achieve strong 
convergence. 
\item
If the second half of \eqref{e:j7yG9i-9jmL409h} is 
by-passed, i.e., if we set
$x_{n+1}=x_{n+1/2}$ and $v^*_{n+1}=v^*_{n+1/2}$, and if the
relaxation parameter $\lambda_n$ is chosen in the range
$[\varepsilon,2-\varepsilon]$, one recovers the
algorithm of \cite[Corollary~3.3]{Genn13}. However, this 
algorithm provides only weak convergence to an unspecified
Kuhn-Tucker point, whereas \eqref{e:j7yG9i-9jmL409h} 
guarantees strong convergence to the best Kuhn-Tucker 
approximation to $(x_0,v_0^*)$. This can be viewed as 
another manifestation of the weak-to-strong convergence principle 
investigated in \cite{Moor01} in a different setting 
($\Tt$-class operators).
\end{enumerate}
\end{remark}

The following proposition is an application of 
Theorem~\ref{t:9i-9jmL402} which describes a concrete implementation
of \eqref{e:j7yG9i-9jmL409h} with a specific rule for selecting
$(a_n,b_n,a_n^*,b_n^*)\in\boldsymbol{G}_\alpha(x_n,v_n^*)$.

\begin{proposition}
\label{p:j7yG9i-9jmL407}
Consider the setting of Problem~\ref{prob:11}. 
Let $\varepsilon\in\zeroun$ and iterate
\begin{equation}
\label{e:9i-9jmL403a}
\begin{array}{l}
\text{for}\;n=0,1,\ldots\\
\left\lfloor
\begin{array}{l}
(\gamma_n,\mu_n)\in [\varepsilon,1/\varepsilon]^2\\
a_n=J_{\gamma_n A}(x_n-\gamma_n L^*v_n^*)\\
l_n=Lx_n\\
b_n=J_{\mu_n B}(l_n+\mu_n v_n^*)\\
s^*_n=\gamma_n^{-1}(x_n-a_n)+\mu_n^{-1}L^*(l_n-b_n)\\
t_n=b_n-La_n\\
\tau_n=\|s_n^*\|^2+\|t_n\|^2\\
\text{if}\;\tau_n=0\\
\left\lfloor
\begin{array}{l}
\theta_n=0\\
\end{array}
\right.\\
\text{if}\;\tau_n>0\\
\left\lfloor
\begin{array}{l}
\lambda_n\in\left[\varepsilon,1\right]\\
\theta_n=\lambda_n\big(\gamma_n^{-1}\|x_n-a_n\|^2+\mu_n^{-1}
\|l_n-b_n\|^2\big)/\tau_n\\
\end{array}
\right.\\
x_{n+1/2}=x_n-\theta_n s^*_n\\
v^*_{n+1/2}=v^*_n-\theta_n t_n\\
\chi_n=\scal{x_0-x_n}{x_n-x_{n+1/2}}
+\scal{v_0^*-v_n^*}{v_n^*-v_{n+1/2}^*}\\
\mu_n=\|x_0-x_n\|^2+\|v_0^*-v_n^*\|^2\\
\nu_n=\|x_n-x_{n+1/2}\|^2+\|v_n^*-v_{n+1/2}^*\|^2\\
\rho_n=\mu_n\nu_n-\chi_n^2\\
\text{if}\;\rho_n=0\;\text{and}\;\chi_n\geq 0\\
\left\lfloor
\begin{array}{l}
x_{n+1}=x_{n+1/2}\\
v^*_{n+1}=v_{n+1/2}^*
\end{array}
\right.\\
\text{if}\;\rho_n>0\;\text{and}\;\chi_n\nu_n\geq\rho_n\\
\left\lfloor
\begin{array}{l}
x_{n+1}=x_0+(1+\chi_n/\nu_n)(x_{n+1/2}-x_n)\\
v^*_{n+1}=v_0^*+(1+\chi_n/\nu_n)(v_{n+1/2}^*-v_n^*)
\end{array}
\right.\\
\text{if}\;\rho_n>0\;\text{and}\;\chi_n\nu_n<\rho_n\\
\left\lfloor
\begin{array}{l}
x_{n+1}=x_n+(\nu_n/\rho_n)\big(\chi_n(x_0-x_n)
+\mu_n(x_{n+1/2}-x_n)\big)\\
v^*_{n+1}=v_n^*+(\nu_n/\rho_n)\big(\chi_n(v_0^*-v_n^*)
+\mu_n(v_{n+1/2}^*-v_n^*)\big).
\end{array}
\right.\\
\end{array}
\right.\\
\end{array}
\end{equation}
Then \eqref{e:9i-9jmL403a} generates infinite sequences 
$(x_n)_{n\in\NN}$ and $(v_n^*)_{n\in\NN}$, and the following
hold:
\begin{enumerate}
\item
\label{p:j7yG9i-9jmL407i}
$\sum_{n\in\NN}\|x_{n+1}-x_n\|^2<\pinf$ and 
$\sum_{n\in\NN}\|v^*_{n+1}-v^*_n\|^2<\pinf$.
\item
\label{p:j7yG9i-9jmL407ii}
$\sum_{n\in\NN}\|s^*_n\|^2<\pinf$ and 
$\sum_{n\in\NN}\|t_n\|^2<\pinf$.
\item
\label{p:j7yG9i-9jmL407iii}
$\sum_{n\in\NN}\|x_n-a_n\|^2<\pinf$ and 
$\sum_{n\in\NN}\|Lx_n-b_n\|^2<\pinf$.
\item
\label{p:j7yG9i-9jmL407iv}
$x_n\to\overline{x}$ and $v_n^*\to\overline{v}^*$.
\end{enumerate}
\end{proposition}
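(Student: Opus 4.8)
The plan is to realize \eqref{e:9i-9jmL403a} as a concrete instance of the abstract algorithm \eqref{e:j7yG9i-9jmL409h} of Theorem~\ref{t:9i-9jmL402}, so that parts \ref{p:j7yG9i-9jmL407i} and \ref{p:j7yG9i-9jmL407ii} follow at once and the crux reduces to verifying the hypothesis of Theorem~\ref{t:9i-9jmL402}\ref{t:9i-9jmL402iii}. The bridge is the resolvent identity: from $a_n=J_{\gamma_n A}(x_n-\gamma_n L^*v_n^*)$ one reads off that $\gamma_n^{-1}(x_n-\gamma_n L^*v_n^*-a_n)\in Aa_n$, hence setting $a_n^*=\gamma_n^{-1}(x_n-a_n)-L^*v_n^*$ gives $(a_n,a_n^*)\in\gra A$; likewise from $b_n=J_{\mu_n B}(Lx_n+\mu_n v_n^*)$ one gets $b_n^*=\mu_n^{-1}(l_n-b_n)+v_n^*$ with $(b_n,b_n^*)\in\gra B$. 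With these choices I would check that $s_n^*=a_n^*+L^*b_n^*$ and $t_n=b_n-La_n$ coincide with the quantities defined in \eqref{e:9i-9jmL403a}, and that the numerator in the definition of $\theta_n$ satisfies $\scal{x_n-a_n}{a_n^*+L^*v_n^*}+\scal{Lx_n-b_n}{b_n^*-v_n^*}=\gamma_n^{-1}\|x_n-a_n\|^2+\mu_n^{-1}\|l_n-b_n\|^2$, which is a short substitution.

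Next I would confirm the selection belongs to $\boldsymbol{G}_\alpha(x_n,v_n^*)$ for a suitable fixed $\alpha\in\RPP$. Since $\gamma_n,\mu_n\in[\varepsilon,1/\varepsilon]$, the left-hand side of the defining inequality in \eqref{e:9g45g2h29a} equals $\gamma_n^{-1}\|x_n-a_n\|^2+\mu_n^{-1}\|l_n-b_n\|^2\geq\varepsilon\big(\|x_n-a_n\|^2+\|l_n-b_n\|^2\big)$, so I must dominate $\|s_n^*\|^2+\|t_n\|^2$ by a constant multiple of $\|x_n-a_n\|^2+\|l_n-b_n\|^2$. Using $s_n^*=\gamma_n^{-1}(x_n-a_n)+\mu_n^{-1}L^*(l_n-b_n)$ and $t_n=b_n-l_n+(l_n-La_n)$ together with the bound $\|L^*(l_n-b_n)\|\le\|L\|\,\|l_n-b_n\|$ and $\|La_n-l_n\|=\|L(a_n-x_n)\|\le\|L\|\,\|x_n-a_n\|$, a routine application of $\|u+w\|^2\le 2\|u\|^2+2\|w\|^2$ yields $\tau_n\le c(\|x_n-a_n\|^2+\|l_n-b_n\|^2)$ with $c$ depending only on $\varepsilon$ and $\|L\|$. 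Hence any $\alpha\in\,]0,\varepsilon/c]$ works, establishing the membership and thereby \ref{p:j7yG9i-9jmL407i} and \ref{p:j7yG9i-9jmL407ii}.

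For \ref{p:j7yG9i-9jmL407iii} I would combine \ref{p:j7yG9i-9jmL407ii} with the membership inequality in the opposite direction: the chain $\varepsilon\big(\|x_n-a_n\|^2+\|l_n-b_n\|^2\big)\le\gamma_n^{-1}\|x_n-a_n\|^2+\mu_n^{-1}\|l_n-b_n\|^2\le\alpha^{-1}\tau_n$ (the last step being exactly the bound that powers \eqref{e:j7yG9i-9jmL409s} in the proof of Theorem~\ref{t:9i-9jmL402}) shows $\sum_n\big(\|x_n-a_n\|^2+\|Lx_n-b_n\|^2\big)<\pinf$, giving in particular $x_n-a_n\to 0$ and $Lx_n-b_n\to 0$.

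The main obstacle, and the only genuinely nontrivial part, is \ref{p:j7yG9i-9jmL407iv}: I must verify the weak-convergence hypothesis $x_n-a_n\weakly 0$ and $v_n^*-b_n^*\weakly 0$ required by Theorem~\ref{t:9i-9jmL402}\ref{t:9i-9jmL402iii}. The first is immediate from \ref{p:j7yG9i-9jmL407iii}, which gives strong convergence $x_n-a_n\to 0$. For the second, I would write $v_n^*-b_n^*=-\mu_n^{-1}(l_n-b_n)=-\mu_n^{-1}(Lx_n-b_n)$ from the expression for $b_n^*$; since $\mu_n^{-1}\in[\varepsilon,1/\varepsilon]$ is bounded and $Lx_n-b_n\to 0$ strongly by \ref{p:j7yG9i-9jmL407iii}, we obtain $v_n^*-b_n^*\to 0$ strongly as well. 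With both hypotheses of Theorem~\ref{t:9i-9jmL402}\ref{t:9i-9jmL402iii} verified, the conclusion $x_n\to\overline{x}$ and $v_n^*\to\overline{v}^*$ follows directly, completing the proof.
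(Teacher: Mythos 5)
Your overall architecture coincides with the paper's: read off $a_n^*=\gamma_n^{-1}(x_n-a_n)-L^*v_n^*$ and $b_n^*=\mu_n^{-1}(Lx_n-b_n)+v_n^*$ from the resolvents, check that $(a_n,b_n,a_n^*,b_n^*)\in\boldsymbol{G}_\alpha(x_n,v_n^*)$ for an explicit $\alpha$ depending only on $\varepsilon$ and $\|L\|$, invoke Theorem~\ref{t:9i-9jmL402} for \ref{p:j7yG9i-9jmL407i}--\ref{p:j7yG9i-9jmL407ii}, and feed \ref{p:j7yG9i-9jmL407iii} into Theorem~\ref{t:9i-9jmL402}\ref{t:9i-9jmL402iii} to get \ref{p:j7yG9i-9jmL407iv}. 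Those portions are sound; the paper outsources the membership computation to \cite{Genn13}, whereas you redo it directly with a different (equally admissible) constant, and your derivation of $v_n^*-b_n^*\to 0$ from \ref{p:j7yG9i-9jmL407iii} is exactly the paper's \eqref{e:294}.

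The gap is in \ref{p:j7yG9i-9jmL407iii}. You need to dominate $\|x_n-a_n\|^2+\|Lx_n-b_n\|^2$ by a constant multiple of $\tau_n$, and you claim the decisive link of your chain, $\gamma_n^{-1}\|x_n-a_n\|^2+\mu_n^{-1}\|Lx_n-b_n\|^2\leq\alpha^{-1}\tau_n$, is ``exactly the bound that powers \eqref{e:j7yG9i-9jmL409s}.'' It is not: the bound used there is the defining inequality of $\boldsymbol{G}_\alpha$, namely $\gamma_n^{-1}\|x_n-a_n\|^2+\mu_n^{-1}\|Lx_n-b_n\|^2\geq\alpha\tau_n$, which is the \emph{reverse} inequality; it bounds these quantities from below by $\tau_n$ and therefore yields nothing when summed. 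The inequality you actually need is true, but it requires a separate argument in which $(x_n-a_n,\,Lx_n-b_n)$ is recovered from $(s_n^*,t_n)$: the paper does this via \eqref{e:9i-9jmL419f}, which follows from solving $(\Id+\gamma_n\mu_n^{-1}L^*L)(x_n-a_n)=\gamma_n s_n^*+\gamma_n\mu_n^{-1}L^*t_n$ and using $\|(\Id+\gamma_n\mu_n^{-1}L^*L)^{-1}\|\leq 1$, after which \eqref{e:69gjn.-08g} handles $\|Lx_n-b_n\|^2$. (An alternative repair: verify the identity $\gamma_n^{-1}\|x_n-a_n\|^2+\mu_n^{-1}\|Lx_n-b_n\|^2=\scal{x_n-a_n}{s_n^*}-\mu_n^{-1}\scal{Lx_n-b_n}{t_n}$, apply Cauchy--Schwarz together with $\|x_n-a_n\|^2+\|Lx_n-b_n\|^2\leq\varepsilon^{-1}\big(\gamma_n^{-1}\|x_n-a_n\|^2+\mu_n^{-1}\|Lx_n-b_n\|^2\big)$, and cancel.) Since \ref{p:j7yG9i-9jmL407iv} rests on \ref{p:j7yG9i-9jmL407iii}, the gap propagates to the main conclusion.
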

\begin{proof}
Let us define 
\begin{equation}
\label{e:j7yG69gjn.-19o}
\alpha=\frac{\varepsilon}{1+\|L\|^2+2(1-\varepsilon^{2})
\text{max}\big\{1,\|L\|^2\big\}}
\end{equation}
and
\begin{equation}
\label{e:j7yG69gjn.-19r}
(\forall n\in\NN)\quad
a^*_n=\gamma_n^{-1}(x_n-a_n)-L^*v_n^*\quad\text{and}\quad
b^*_n=\mu_n^{-1}(Lx_n-b_n)+v_n^*.
\end{equation}
Then it is shown in \cite[proof of Proposition~3.5]{Genn13} that 
\begin{equation}
\label{e:9i-9jmL419e}
(\forall n\in\NN)\quad
(a_n,b_n,a_n^*,b_n^*)\in\boldsymbol{G}_\alpha(x_n,v_n^*)
\end{equation}
and
\begin{equation}
\label{e:9i-9jmL419f}
(\forall n\in\NN)\quad\|x_n-a_n\|^2\leq 2\varepsilon^{-2}
\big(\|s^*_n\|^2+\varepsilon^{-2}\|L\|^2\,\|t_n\|^2\big).
\end{equation}
We deduce from \eqref{e:j7yG69gjn.-19r} and 
\eqref{e:9i-9jmL419e} that \eqref{e:9i-9jmL403a} is a special case 
of \eqref{e:j7yG9i-9jmL409h}. Consequently, assertions
\ref{p:j7yG9i-9jmL407i} and \ref{p:j7yG9i-9jmL407ii} follow 
from their counterparts in Theorem~\ref{t:9i-9jmL402}. To show 
\ref{p:j7yG9i-9jmL407iii} it suffices to note that 
\eqref{e:9i-9jmL419f} and  \ref{p:j7yG9i-9jmL407ii} imply that
\begin{equation}
\label{e:293}
\sum_{n\in\NN}\|x_n-a_n\|^2<\pinf 
\end{equation}
and hence that
$\sum_{n\in\NN}\|Lx_n-b_n\|^2<\pinf$ since\begin{equation}
\label{e:69gjn.-08g}
(\forall n\in\NN)\quad
\|Lx_n-b_n\|^2=\|L(x_n-a_n)+La_n-b_n\|^2\leq 
2\big(\|L\|^2\,\|x_n-a_n\|^2+\|t_n\|^2\big).
\end{equation}
In turn, \eqref{e:j7yG69gjn.-19r} yields
\begin{equation}
\label{e:294}
\sum_{n\in\NN}\|v_n^*-b_n^*\|^2
=\sum_{n\in\NN}\mu_n^{-2}\|Lx_n-b_n\|^2
\leq\varepsilon^{-2}\sum_{n\in\NN}\|Lx_n-b_n\|^2<\pinf.
\end{equation}
Altogether, \ref{p:j7yG9i-9jmL407iv} follows from 
\eqref{e:293}, \eqref{e:294}, and 
Theorem~\ref{t:9i-9jmL402}\ref{t:9i-9jmL402iii}.
\end{proof}

\begin{remark}
\label{r:9i-9jmL429}
In \eqref{e:9i-9jmL403a}, the identity $\tau_n=0$ can be used as 
a stopping rule. Indeed, $\tau_n=0$ $\Leftrightarrow$
$(a_n^*+L^*b_n^*,b_n-La_n)=(0,0)$ $\Leftrightarrow$
$(-L^*b_n^*,La_n)=(a_n^*,b_n)\in Aa_n\times B^{-1}b_n^*$
$\Leftrightarrow$ $(a_n,b_n^*)\in\boldsymbol{Z}$. On the other hand,
it follows from \eqref{e:9i-9jmL419f} and 
\eqref{e:69gjn.-08g} that $\tau_n=0$ $\Rightarrow$
$(x_n,v_n^*)=(a_n,b_n^*)$. Altogether, Remark~\ref{r:9i-9jmL428} 
yields $(x_n,v_n^*)=P_{\boldsymbol{Z}}(x_0,v_0^*)=
(\overline{x},\overline{v}^*)$.
\end{remark}

\begin{remark}
An important feature of algorithm \eqref{e:9i-9jmL403a} which is 
inherited from that of \cite[Proposition~3.5]{Genn13} is 
that it does not require the knowledge of $\|L\|$ or necessitate 
potentially hard to implement inversions of linear operators. 
\end{remark}

\section{Application to systems of monotone inclusions}
\label{sec:4}

As discussed in 
\cite{Genn13,Sico10,Atto11,Bot13b,Bric13,Siop13,Juan12}, 
various problems in applied mathematics can be modeled by 
systems of coupled monotone inclusions. In this section, we
consider the following setting.

\begin{problem}
\label{prob:12}
Let $m$ and $K$ be strictly positive integers, let
$(\HH_i)_{1\leq i\leq m}$ and $(\GG_k)_{1\leq k\leq K}$ be real 
Hilbert spaces, and set 
$\KKK=\HH_1\oplus\cdots\HH_m\oplus\GG_1\oplus\cdots\oplus\GG_K$.
For every $i\in\{1,\ldots,m\}$ and every $k\in\{1,\ldots,K\}$, let 
$A_i\colon\HH_i\to 2^{\HH_i}$ and $B_k\colon\GG_k\to 2^{\GG_k}$ 
be maximally monotone, let $z_i\in\HH_i$, let $r_k\in\GG_k$, 
and let $L_{ki}\colon\HH_i\to\GG_k$ be linear and bounded. Let 
$(\boldsymbol{x}_0,\boldsymbol{v}_0^*)=
(x_{1,0},\ldots,x_{m,0},v_{1,0}^*,\ldots,v_{K,0}^*)\in\KKK$, 
assume that the coupled inclusions problem
\begin{multline}
\label{e:lk87b'kk-24p}
\text{find}\;\;\overline{x}_1\in\HH_1,\ldots,\overline{x}_m\in\HH_m
\;\;\text{such that}\\
(\forall i\in\{1,\ldots,m\})\quad
z_i\in A_i\overline{x}_i+\Sum_{k=1}^KL_{ki}^*
\bigg(B_k\bigg(\Sum_{j=1}^mL_{kj}\overline{x}_j-r_k\bigg)\bigg)
\end{multline}
has at least one solution, and consider the dual problem 
\begin{multline}
\label{e:lk87b'kk-24d}
\text{find}\;\;\overline{v}_1^*\in\GG_1,\ldots,\overline{v}^*_K
\in\GG_K
\;\;\text{such that}\\
(\forall k\in\{1,\ldots,K\})\quad
-r_k\in-\Sum_{i=1}^mL_{ki}\bigg(A_i^{-1}
\bigg(z_i-\Sum_{l=1}^KL_{li}^*\overline{v}^*_l\bigg)\bigg)
+B_k^{-1}\overline{v}^*_k.
\end{multline}
The problem is to find the best approximation 
$(\overline{x}_1,\ldots,\overline{x}_m,\overline{v}_1^*,\ldots,
\overline{v}_K^*)$ to 
$(\boldsymbol{x}_0,\boldsymbol{v}_0^*)$ from the 
associated Kuhn-Tucker set
\begin{multline}
\label{e:9g45g2h07k}
\boldsymbol{Z}=\bigg\{(x_1,\ldots,x_m,v_1^*,\ldots,v^*_K)\in\KKK
\;\bigg |\;
(\forall i\in\{1,\ldots,m\})\;\;z_i-\sum_{k=1}^KL_{ki}^*v_k^*\in
A_ix_i\:\;\text{and}\\
(\forall k\in\{1,\ldots,K\})\;\;\sum_{i=1}^mL_{ki}x_i-r_k\in
B_k^{-1}v_k^*\bigg\}.
\end{multline}
\end{problem}

The next result presents a strongly convergent method for solving
Problem~\ref{prob:12}. Let us note that existing methods require 
stringent additional conditions on the operators to achieve strong 
convergence, produce only unspecified points in the Kuhn-Tucker 
set, and necessitate the knowledge of the norms of the linear 
operators present in the model \cite{Sico10,Siop13}. These
shortcomings are simultaneously circumvented in the proposed 
algorithm.

\begin{proposition}
\label{p:9g45g2h07}
Consider the setting of Problem~\ref{prob:12}. 
Let $\varepsilon\in\zeroun$ and iterate
\begin{equation}
\label{e:j7yG9i-9jmL407a}
\begin{array}{l}
\text{for}\;n=0,1,\ldots\\
\left\lfloor
\begin{array}{l}
(\gamma_n,\mu_n)\in [\varepsilon,1/\varepsilon]^2\\
\text{for}\;i=1,\ldots,m\\
\left\lfloor
\begin{array}{l}
a_{i,n}=J_{\gamma_n A_i}\big(x_{i,n}+\gamma_n
\big(z_i-\sum_{k=1}^KL_{ki}^*v_{k,n}^*\big)\big)\\
\end{array}
\right.\\
\text{for}\;k=1,\ldots,K\\
\left\lfloor
\begin{array}{l}
l_{k,n}=\sum_{i=1}^mL_{ki}x_{i,n}\\
b_{k,n}=r_k+J_{\mu_n B_k}\big(l_{k,n}+\mu_nv_{k,n}^*-r_k\big)\\
t_{k,n}=b_{k,n}-\sum_{i=1}^mL_{ki}a_{i,n}\\
\end{array}
\right.\\
\text{for}\;i=1,\ldots,m\\
\left\lfloor
\begin{array}{l}
s^*_{i,n}=\gamma_n^{-1}(x_{i,n}-a_{i,n})+
\mu_n^{-1}\sum_{k=1}^KL_{ki}^*(l_{k,n}-b_{k,n})\\
\end{array}
\right.\\
\tau_n=\sum_{i=1}^m\|s_{i,n}^*\|^2+\sum_{k=1}^K\|t_{k,n}\|^2\\
\text{if}\;\tau_n=0\\
\left\lfloor
\begin{array}{l}
\theta_n=0\\
\end{array}
\right.\\
\text{if}\;\tau_n>0\\
\left\lfloor
\begin{array}{l}
\lambda_n\in\left[\varepsilon,1\right]\\
\theta_n=\lambda_n\big(\gamma_n^{-1}\sum_{i=1}^m
\|x_{i,n}-a_{i,n}\|^2+\mu_n^{-1}
\sum_{k=1}^K\|l_{k,n}-b_{k,n}\|^2\big)/\tau_n\\
\end{array}
\right.\\
\text{for}\;i=1,\ldots,m\\
\left\lfloor
\begin{array}{l}
x_{i,n+1/2}=x_{i,n}-\theta_n s^*_{i,n}\\
\end{array}
\right.\\
\text{for}\;k=1,\ldots,K\\
\left\lfloor
\begin{array}{l}
v^*_{k,n+1/2}=v^*_{k,n}-\theta_n t_{k,n}
\end{array}
\right.\\
\chi_n=\sum_{i=1}^m\scal{x_{i,0}-x_{i,n}}{x_{i,n}-x_{i,n+1/2}}
+\sum_{k=1}^K\scal{v_{k,0}^*-v_{k,n}^*}{v_{k,n}^*-v_{k,n+1/2}^*}\\
\mu_n=\sum_{i=1}^m\|x_{i,0}-x_{i,n}\|^2+\sum_{k=1}^K
\|v_{k,0}^*-v_{k,n}^*\|^2\\
\nu_n=\sum_{i=1}^m\|x_{i,n}-x_{i,n+1/2}\|^2+
\sum_{k=1}^K\|v_{k,n}^*-v_{k,n+1/2}^*\|^2\\
\rho_n=\mu_n\nu_n-\chi_n^2\\
\text{if}\;\rho_n=0\;\text{and}\;\chi_n\geq 0\\
\left\lfloor
\begin{array}{l}
\text{for}\;i=1,\ldots,m\\
\left\lfloor
\begin{array}{l}
x_{i,n+1}=x_{i,n+1/2}\\
\end{array}
\right.\\
\text{for}\;k=1,\ldots,K\\
\left\lfloor
\begin{array}{l}
v^*_{k,n+1}=v_{k,n+1/2}^*\\
\end{array}
\right.\\
\end{array}
\right.\\
\text{if}\;\rho_n>0\;\text{and}\;\chi_n\nu_n\geq\rho_n\\
\left\lfloor
\begin{array}{l}
\text{for}\;i=1,\ldots,m\\
\left\lfloor
\begin{array}{l}
x_{i,n+1}=x_{i,0}+(1+\chi_n/\nu_n)(x_{i,n+1/2}-x_{i,n})\\
\end{array}
\right.\\
\text{for}\;k=1,\ldots,K\\
\left\lfloor
\begin{array}{l}
v^*_{k,n+1}=v_{k,0}^*+(1+\chi_n/\nu_n)(v_{k,n+1/2}^*-v_{k,n}^*)
\end{array}
\right.\\
\end{array}
\right.\\
\text{if}\;\rho_n>0\;\text{and}\;\chi_n\nu_n<\rho_n\\
\left\lfloor
\begin{array}{l}
\text{for}\;i=1,\ldots,m\\
\left\lfloor
\begin{array}{l}
x_{i,n+1}=x_{i,n}+(\nu_n/\rho_n)\big(\chi_n(x_{i,0}-x_{i,n})
+\mu_n(x_{i,n+1/2}-x_{i,n})\big)\\
\end{array}
\right.\\
\text{for}\;k=1,\ldots,K\\
\left\lfloor
\begin{array}{l}
v^*_{k,n+1}=v_{k,n}^*+(\nu_n/\rho_n)
\big(\chi_n(v_{k,0}^*-v_{k,n}^*)
+\mu_n(v_{k,n+1/2}^*-v_{k,n}^*)\big).
\end{array}
\right.\\
\end{array}
\right.\\
\end{array}
\right.\\
\end{array}
\end{equation}
Then \eqref{e:j7yG9i-9jmL407a} generates infinite sequences
$(x_{1,n})_{n\in\NN}$, \ldots, $(x_{m,n})_{n\in\NN}$,
$(v_{1,n}^*)_{n\in\NN}$, \ldots, $(v_{K,n}^*)_{n\in\NN}$,
and the following hold:
\begin{enumerate}
\item
\label{p:9g45g2h07i}
Let $i\in\{1,\ldots,m\}$. Then
$\sum_{n\in\NN}\!\|s^*_{i,n}\|^2\!<\!\pinf$,
$\sum_{n\in\NN}\!\|x_{i,n+1}-x_{i,n}\|^2\!<\!\pinf$,
$\sum_{n\in\NN}\!\|x_{i,n}-a_{i,n}\|^2\!<\!\pinf$, and
$x_{i,n}\to\overline{x}_i$.
\item
\label{p:9g45g2h07ii}
Let $k\in\{1,\ldots,K\}$. Then
$\sum_{n\in\NN}\!\|t_{k,n}\|^2\!<\!\pinf$,
$\sum_{n\in\NN}\!\|v^*_{k,n+1}-v^*_{k,n}\|^2\!<\!\pinf$,
$\sum_{n\in\NN}\!\|\sum_{i=1}^mL_{ki}x_{i,n}-b_{k,n}\|^2\!<\!\pinf$,
and $v^*_{k,n}\to\overline{v}_k^*$. 
\end{enumerate}
\end{proposition}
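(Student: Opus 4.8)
The plan is to realize Problem~\ref{prob:12} as a single instance of Problem~\ref{prob:11} in appropriate product spaces, to verify that the iteration \eqref{e:j7yG9i-9jmL407a} is then nothing but \eqref{e:9i-9jmL403a} for that instance, and finally to read off the announced conclusions from Proposition~\ref{p:j7yG9i-9jmL407} componentwise. First I would set $\HH=\HH_1\oplus\cdots\oplus\HH_m$ and $\GG=\GG_1\oplus\cdots\oplus\GG_K$, so that $\KKK=\HH\oplus\GG$, and introduce $\boldsymbol{L}\colon\HH\to\GG\colon(x_i)_{1\le i\le m}\mapsto(\sum_{i=1}^mL_{ki}x_i)_{1\le k\le K}$, whose adjoint is $\boldsymbol{L}^*\colon(v_k^*)_{1\le k\le K}\mapsto(\sum_{k=1}^KL_{ki}^*v_k^*)_{1\le i\le m}$. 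To absorb the shifts I would introduce the product operators $\boldsymbol{A}\colon\HH\to 2^{\HH}\colon(x_i)_{1\le i\le m}\mapsto\times_{i=1}^m(A_ix_i-z_i)$ and $\boldsymbol{B}\colon\GG\to 2^{\GG}\colon(y_k)_{1\le k\le K}\mapsto\times_{k=1}^KB_k(y_k-r_k)$. Being Cartesian products of maximally monotone operators, up to a translation of the values (for $\boldsymbol{A}$) or of the arguments (for $\boldsymbol{B}$), both $\boldsymbol{A}$ and $\boldsymbol{B}$ are maximally monotone.

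Next I would verify that this reformulation is faithful. Expanding $0\in\boldsymbol{A}\boldsymbol{x}+\boldsymbol{L}^*\boldsymbol{B}\boldsymbol{L}\boldsymbol{x}$ block by block reproduces \eqref{e:lk87b'kk-24p}, so solvability of \eqref{e:lk87b'kk-24p} supplies the standing hypothesis of Problem~\ref{prob:11}; using $\boldsymbol{A}^{-1}\colon w\mapsto(A_i^{-1}(w_i+z_i))_{1\le i\le m}$ and $\boldsymbol{B}^{-1}\colon v^*\mapsto(r_k+B_k^{-1}v_k^*)_{1\le k\le K}$, the dual $0\in-\boldsymbol{L}\boldsymbol{A}^{-1}(-\boldsymbol{L}^*\boldsymbol{v}^*)+\boldsymbol{B}^{-1}\boldsymbol{v}^*$ reproduces \eqref{e:lk87b'kk-24d}, and the Kuhn--Tucker set $\menge{(\boldsymbol{x},\boldsymbol{v}^*)\in\KKK}{-\boldsymbol{L}^*\boldsymbol{v}^*\in\boldsymbol{A}\boldsymbol{x}\;\text{and}\;\boldsymbol{L}\boldsymbol{x}\in\boldsymbol{B}^{-1}\boldsymbol{v}^*}$ coincides with \eqref{e:9g45g2h07k}. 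Hence $(\overline{x},\overline{v}^*)=P_{\boldsymbol{Z}}(\boldsymbol{x}_0,\boldsymbol{v}_0^*)$ has components $(\overline{x}_i)_{1\le i\le m}$ and $(\overline{v}_k^*)_{1\le k\le K}$.

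The heart of the argument is matching the iterations, which comes down to two resolvent identities. Since $\boldsymbol{A}\colon\boldsymbol{x}\mapsto\times_{i=1}^m(A_ix_i-z_i)$, a direct computation gives $J_{\gamma\boldsymbol{A}}\colon\boldsymbol{x}\mapsto(J_{\gamma A_i}(x_i+\gamma z_i))_{1\le i\le m}$, so the $i$-th block of $\boldsymbol{a}_n=J_{\gamma_n\boldsymbol{A}}(\boldsymbol{x}_n-\gamma_n\boldsymbol{L}^*\boldsymbol{v}_n^*)$ is $J_{\gamma_n A_i}(x_{i,n}+\gamma_n(z_i-\sum_kL_{ki}^*v_{k,n}^*))=a_{i,n}$; since $\boldsymbol{B}\colon\boldsymbol{y}\mapsto\times_{k=1}^KB_k(y_k-r_k)$, one gets $J_{\mu\boldsymbol{B}}\colon\boldsymbol{y}\mapsto(r_k+J_{\mu B_k}(y_k-r_k))_{1\le k\le K}$, so the $k$-th block of $\boldsymbol{b}_n$ is $b_{k,n}$. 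Substituting these into $\boldsymbol{s}_n^*=\gamma_n^{-1}(\boldsymbol{x}_n-\boldsymbol{a}_n)+\mu_n^{-1}\boldsymbol{L}^*(\boldsymbol{l}_n-\boldsymbol{b}_n)$ and $\boldsymbol{t}_n=\boldsymbol{b}_n-\boldsymbol{L}\boldsymbol{a}_n$, and using that the norm and scalar product of $\KKK$ split into their blocks, the quantities $\tau_n,\theta_n,\chi_n,\mu_n,\nu_n,\rho_n$ and the three Haugazeau branches of \eqref{e:9i-9jmL403a} reproduce verbatim those of \eqref{e:j7yG9i-9jmL407a}. I expect this block-by-block bookkeeping to be the main obstacle: one must keep each translation by $\boldsymbol{z}$ or $\boldsymbol{r}$ on the correct side of the appropriate resolvent, and confirm that the structure of $\boldsymbol{L}$ and $\boldsymbol{L}^*$ produces exactly the coupling sums $\sum_kL_{ki}^*(\cdot)$ and $\sum_iL_{ki}(\cdot)$ that appear in \eqref{e:j7yG9i-9jmL407a}.

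Once \eqref{e:j7yG9i-9jmL407a} is identified with \eqref{e:9i-9jmL403a} for the data $(\boldsymbol{A},\boldsymbol{B},\boldsymbol{L})$ and reference point $(\boldsymbol{x}_0,\boldsymbol{v}_0^*)$, Proposition~\ref{p:j7yG9i-9jmL407} applies. Decomposing its conclusions along the orthogonal sum $\KKK=\HH_1\oplus\cdots\oplus\GG_K$, the summability of $\|\boldsymbol{s}_n^*\|^2$, $\|\boldsymbol{t}_n\|^2$, $\|\boldsymbol{x}_{n+1}-\boldsymbol{x}_n\|^2$, $\|\boldsymbol{v}_{n+1}^*-\boldsymbol{v}_n^*\|^2$, $\|\boldsymbol{x}_n-\boldsymbol{a}_n\|^2$, and $\|\boldsymbol{L}\boldsymbol{x}_n-\boldsymbol{b}_n\|^2$ splits into the per-block series of \ref{p:9g45g2h07i} and \ref{p:9g45g2h07ii}, while the strong convergence $(\boldsymbol{x}_n,\boldsymbol{v}_n^*)\to(\overline{x},\overline{v}^*)$ yields $x_{i,n}\to\overline{x}_i$ and $v_{k,n}^*\to\overline{v}_k^*$ for every $i$ and $k$, completing the proof.
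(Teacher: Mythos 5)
Your proposal is correct and follows essentially the same route as the paper: reformulate Problem~\ref{prob:12} as an instance of Problem~\ref{prob:11} via the product operators $(x_i)_{1\le i\le m}\mapsto\cart_{i=1}^{m}(-z_i+A_ix_i)$, $(y_k)_{1\le k\le K}\mapsto\cart_{k=1}^{K}B_k(y_k-r_k)$, and the coupling operator $L$, use the standard resolvent formulas for such products to identify \eqref{e:j7yG9i-9jmL407a} with \eqref{e:9i-9jmL403a}, and then read off the conclusions of Proposition~\ref{p:j7yG9i-9jmL407} componentwise. The block-by-block bookkeeping you flag as the main obstacle is indeed the only substantive content of the argument, and you handle it as the paper does.
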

\begin{proof}
Let us set $\HH=\bigoplus_{i=1}^m\HH_i$ and 
$\GG=\bigoplus_{k=1}^K\GG_k$, and let us introduce the operators
\begin{equation}
\label{e:9g45g2h10a}
\begin{cases}
A\colon\HH\to 2^{\HH}\colon (x_i)_{1\leq i\leq m}\mapsto
\cart_{\!i=1}^{\!m}(-z_i+A_ix_i)\\
B\colon\GG\to 2^{\GG}\colon
(y_k)_{1\leq k\leq K}\mapsto\cart_{\!k=1}^{\!K}B_k(y_k-r_k)\\
L\colon\HH\to\GG\colon (x_i)_{1\leq i\leq m}\mapsto 
\big(\sum_{i=1}^mL_{ki}x_i\big)_{1\leq k\leq K}.
\end{cases}
\end{equation}
Then $L^*\colon\GG\to\HH\colon (y_k)_{1\leq k\leq K}
\mapsto(\sum_{k=1}^KL_{ki}^*y_k)_{1\leq i\leq m}$ and, in this 
setting, Problem~\ref{prob:11} becomes
Problem~\ref{prob:12}. Next, for every $n\in\NN$, let us introduce 
the variables 
$a_n=(a_{i,n})_{1\leq i\leq m}$, 
$s^*_n=(s^*_{i,n})_{1\leq i\leq m}$, 
$x_n=(x_{i,n})_{1\leq i\leq m}$, 
$x_{n+1/2}=(x_{i,n+1/2})_{1\leq i\leq m}$, 
$b_n=(b_{k,n})_{1\leq k\leq K}$, 
$l_n=(l_{k,n})_{1\leq k\leq K}$, 
$t_n=(t_{k,n})_{1\leq k\leq K}$,
$v_n^*=(v^*_{k,n})_{1\leq k\leq K}$, and
$v_{n+1/2}^*=(v^*_{k,n+1/2})_{1\leq k\leq K}$. 
Since \cite[Propositions~23.15 and 23.16]{Livre1} assert that
\begin{multline}
(\forall n\in\NN)(\forall (x_i)_{1\leq i\leq m}\in\HH)
(\forall (y_k)_{1\leq k\leq K}\in\GG)\quad
J_{\gamma_n A}(x_i)_{1\leq i\leq m}=\big(J_{\gamma_n A_i}
(x_{i}+\gamma_n z_i)\big)_{1\leq i\leq m}\\
\text{and}\quad J_{\mu_n B}(y_k)_{1\leq k\leq K}=
\big(r_k+J_{\mu_n B_k}(y_{k}-r_k)\big)_{1\leq k\leq K},
\end{multline}
\eqref{e:9i-9jmL403a} reduces in the present scenario to
\eqref{e:j7yG9i-9jmL407a}. Thus, the results follow from 
Proposition~\ref{p:j7yG9i-9jmL407}.
\end{proof}

\begin{example}
\label{ex:9i-9jmL421}
Let $A$, $(B_k)_{1\leq k\leq K}$, and $(S_k)_{1\leq k\leq K}$ be 
maximally monotone operators acting on a real Hilbert space $\HH$. 
We revisit a problem discussed in \cite[Section~4]{Siop13}, namely
the relaxation of the possibly inconsistent inclusion problem 
\begin{equation}
\label{e:cras1995}
\text{find}\;\;\overline{x}\in\HH\;\;\text{such that}\;\;
0\in A\overline{x}\cap\bigcap_{k=1}^KB_k\overline{x}
\end{equation}
to
\begin{equation}
\label{e:2012-11-29p}
\text{find}\;\;\overline{x}\in\HH\;\;\text{such that}\;\;
0\in A\overline{x}+\sum_{k=1}^K(B_k\infconv S_k)\overline{x},
\quad\text{where}\quad B_k\infconv S_k=
\big(B^{-1}_k+S^{-1}_k\big)^{-1}.
\end{equation}
We assume that \eqref{e:2012-11-29p} has at least one solution 
and that, for every $k\in\{1,\ldots,K\}$, $S_k^{-1}$ is at most 
single-valued and strictly monotone, with $S_k^{-1} 0=\{0\}$. 
Hence, \eqref{e:2012-11-29p} is a relaxation of 
\eqref{e:cras1995} in the sense that if the latter happens to
have solutions, they coincide with those of the former 
\cite[Proposition~4.2]{Siop13}. As shown in \cite{Siop13}, this 
framework captures many relaxation schemes, and a point 
$\overline{x}_1\in\HH$ solves \eqref{e:2012-11-29p} if 
and only if $(\overline{x}_1,\overline{x}_2,\ldots,\overline{x}_m)$ 
solves \eqref{e:lk87b'kk-24p}, where $m=K+1$, $\HH_1=\HH$, $A_1=A$, 
$z_1=0$, and, for every $k\in\{1,\ldots,K\}$,  
\begin{equation}
\label{e:9i-9jmL423}
\begin{cases}
\HH_{k+1}=\HH\\
\GG_{k}=\HH\\
A_{k+1}=S_k\\
z_{k+1}=0\\
r_k=0
\end{cases}
\qquad\text{and}\quad
\begin{cases}
L_{k1}=\Id\\
(\forall i\in\{2,\ldots,m\})\:\; L_{ki}=
\begin{cases}
-\Id,&\text{if}\;\;i=k+1;\\
0,&\text{otherwise.}
\end{cases}
\end{cases}
\end{equation}
Thus \eqref{e:j7yG9i-9jmL407a} can be reduced to
\begin{equation}
\label{e:9i-9jmL422a}
\begin{array}{l}
\text{for}\;n=0,1,\ldots\\
\left\lfloor
\begin{array}{l}
(\gamma_n,\mu_n)\in [\varepsilon,1/\varepsilon]^2\\
a_{1,n}=J_{\gamma_n A}\big(x_{1,n}-\gamma_n
\sum_{k=1}^Kv_{k,n}^*\big)\\
\text{for}\;k=1,\ldots,K\\
\left\lfloor
\begin{array}{l}
a_{k+1,n}=J_{\gamma_n S_k}\big(x_{k+1,n}+\gamma_n v_{k,n}^*\big)\\
l_{k,n}=x_{1,n}-x_{k+1,n}\\
b_{k,n}=J_{\mu_n B_k}\big(l_{k,n}+\mu_nv_{k,n}^*\big)\\
t_{k,n}=b_{k,n}+a_{k+1,n}-a_{1,n}\\
s^*_{k+1,n}=\gamma_n^{-1}(x_{k+1,n}-a_{k+1,n})+
\mu_n^{-1}(b_{k,n}-l_{k,n})\\
\end{array}
\right.\\
s^*_{1,n}=\gamma_n^{-1}(x_{1,n}-a_{1,n})+
\mu_n^{-1}\sum_{k=1}^K(l_{k,n}-b_{k,n})\\
\tau_n=\sum_{k=1}^{K+1}\|s_{k,n}^*\|^2+\sum_{k=1}^K\|t_{k,n}\|^2\\
\text{if}\;\tau_n=0\\
\left\lfloor
\begin{array}{l}
\theta_n=0\\
\end{array}
\right.\\
\text{if}\;\tau_n>0\\
\left\lfloor
\begin{array}{l}
\lambda_n\in\left[\varepsilon,1\right]\\
\theta_n=\lambda_n\big(\gamma_n^{-1}\sum_{k=1}^{K+1}
\|x_{k,n}-a_{k,n}\|^2+\mu_n^{-1}
\sum_{k=1}^K\|l_{k,n}-b_{k,n}\|^2\big)/\tau_n\\
\end{array}
\right.\\
x_{1,n+1/2}=x_{1,n}-\theta_n s^*_{1,n}\\
\text{for}\;k=1,\ldots,K\\
\left\lfloor
\begin{array}{l}
x_{k+1,n+1/2}=x_{k+1,n}-\theta_n s^*_{k+1,n}\\
v^*_{k,n+1/2}=v^*_{k,n}-\theta_n t_{k,n}
\end{array}
\right.\\
\chi_n=\sum_{k=1}^{K+1}\scal{x_{k,0}-x_{k,n}}{x_{k,n}-x_{k,n+1/2}}
+\sum_{k=1}^K\scal{v_{k,0}^*-v_{k,n}^*}{v_{k,n}^*-v_{k,n+1/2}^*}\\
\mu_n=\sum_{k=1}^{K+1}\|x_{k,0}-x_{k,n}\|^2+\sum_{k=1}^K
\|v_{k,0}^*-v_{k,n}^*\|^2\\
\nu_n=\sum_{k=1}^{K+1}\|x_{k,n}-x_{k,n+1/2}\|^2+
\sum_{k=1}^K\|v_{k,n}^*-v_{k,n+1/2}^*\|^2\\
\rho_n=\mu_n\nu_n-\chi_n^2\\
\text{if}\;\rho_n=0\;\text{and}\;\chi_n\geq 0\\
\left\lfloor
\begin{array}{l}
x_{1,n+1}=x_{1,n+1/2}\\
\text{for}\;k=1,\ldots,K\\
\left\lfloor
\begin{array}{l}
x_{k+1,n+1}=x_{k+1,n+1/2}\\
v^*_{k,n+1}=v_{k,n+1/2}^*\\
\end{array}
\right.\\
\end{array}
\right.\\
\text{if}\;\rho_n>0\;\text{and}\;\chi_n\nu_n\geq\rho_n\\
\left\lfloor
\begin{array}{l}
x_{1,n+1}=x_{1,0}+(1+\chi_n/\nu_n)(x_{1,n+1/2}-x_{1,n})\\
\text{for}\;k=1,\ldots,K\\
\left\lfloor
\begin{array}{l}
x_{k+1,n+1}=x_{k+1,0}+(1+\chi_n/\nu_n)(x_{k+1,n+1/2}-x_{k+1,n})\\
v^*_{k,n+1}=v_{k,0}^*+(1+\chi_n/\nu_n)(v_{k,n+1/2}^*-v_{k,n}^*)
\end{array}
\right.\\
\end{array}
\right.\\
\text{if}\;\rho_n>0\;\text{and}\;\chi_n\nu_n<\rho_n\\
\left\lfloor
\begin{array}{l}
x_{1,n+1}=x_{1,n}+(\nu_n/\rho_n)\big(\chi_n(x_{1,0}-x_{1,n})
+\mu_n(x_{1,n+1/2}-x_{1,n})\big)\\
\text{for}\;k=1,\ldots,K\\
\left\lfloor
\begin{array}{l}
x_{k+1,n+1}=x_{k+1,n}+(\nu_n/\rho_n)\big(\chi_n(x_{k+1,0}-x_{k+1,n})
+\mu_n(x_{k+1,n+1/2}-x_{k+1,n})\big)\\
v^*_{k,n+1}=v_{k,n}^*+(\nu_n/\rho_n)
\big(\chi_n(v_{k,0}^*-v_{k,n}^*)
+\mu_n(v_{k,n+1/2}^*-v_{k,n}^*)\big),
\end{array}
\right.\\
\end{array}
\right.\\
\end{array}
\right.\\
\end{array}
\end{equation}
and it follows from Proposition~\ref{p:9g45g2h07} that 
$(x_{1,n})_{n\in\NN}$ converges strongly to a solution 
$\overline{x}_1$ to the relaxed problem \eqref{e:2012-11-29p}. Let 
us note that the algorithm proposed in 
\cite[Proposition~4.2]{Siop13} to solve \eqref{e:2012-11-29p}
requires that $A$ be uniformly monotone at $\overline{x}_1$ to 
guarantee strong convergence, whereas this assumption is not needed 
here. In addition, the scaling parameters used in the resolvents of 
the monotone operators in \cite[Proposition~4.2]{Siop13} must be 
identical at each iteration and bounded
by a fixed constant: $(\forall n\in\NN)$ 
$\gamma_n=\mu_n\in[\varepsilon,(1-\varepsilon)/\sqrt{K+1}]$. By 
contrast, the parameters $\mu_n$ and $\gamma_n$ in
\eqref{e:9i-9jmL422a} may differ and they can be 
arbitrarily large since $\varepsilon$ can be arbitrarily small, 
which could have some beneficial impact in terms of speed of 
convergence.
\end{example}

As a second illustration of Proposition~\ref{p:9g45g2h07}, we 
consider the following multivariate minimization problem.

\begin{problem}
\label{prob:13}
Let $m$ and $K$ be strictly positive integers, let
$(\HH_i)_{1\leq i\leq m}$ and $(\GG_k)_{1\leq k\leq K}$ be real 
Hilbert spaces, and set 
$\KKK=\HH_1\oplus\cdots\HH_m\oplus\GG_1\oplus\cdots\oplus\GG_K$.
For every $i\in\{1,\ldots,m\}$ and every $k\in\{1,\ldots,K\}$, let 
$f_i\in\Gamma_0(\HH_i)$ and $g_k\in\Gamma_0(\GG_k)$,
let $z_i\in\HH_i$, let $r_k\in\GG_k$, 
and let $L_{ki}\colon\HH_i\to\GG_k$ be linear and bounded. Let 
$(\boldsymbol{x}_0,\boldsymbol{v}_0^*)=
(x_{1,0},\ldots,x_{m,0},v_{1,0}^*,\ldots,v_{K,0}^*)\in\KKK$ and
assume that 
\begin{equation}
\label{e:2012-10-21a}
(\forall i\in\{1,\ldots,m\})\quad
z_i\in\ran\bigg(\partial f_i+\sum_{k=1}^KL_{ki}^*\circ
\partial g_k\circ\bigg(\sum_{j=1}^mL_{kj}\cdot-r_k\bigg)\bigg).
\end{equation}
Consider the primal problem 
\begin{equation}
\label{e:9i-9jmL420p}
\minimize{x_1\in\HH_1,\ldots,\,x_m\in\HH_m}{\sum_{i=1}^m
\big(f_i(x_i)-\scal{x_i}{z_i}\big)+\sum_{k=1}^K 
g_k\bigg(\sum_{i=1}^mL_{ki}x_i-r_k\bigg)}
\end{equation}
and the dual problem
\begin{equation}
\label{e:9i-9jmL420d}
\minimize{v^*_1\in\GG_1,\ldots,\,v^*_K\in\GG_K}{\sum_{i=1}^m
f_i^*\bigg(z_i-\sum_{k=1}^KL_{ki}^*v^*_k\bigg)
+\sum_{k=1}^K\big(g^*_k(v^*_k)+\scal{v^*_k}{r_k}\big)}.
\end{equation}
The objective is to find the best approximation 
$(\overline{x}_1,\ldots,\overline{x}_m,\overline{v}_1^*,\ldots,
\overline{v}_K^*)$ to 
$(\boldsymbol{x}_0,\boldsymbol{v}_0^*)$ from the 
associated Kuhn-Tucker set
\begin{multline}
\label{e:9i-9jmL420k}
\boldsymbol{Z}=\bigg\{(x_1,\ldots,x_m,v_1^*,\ldots,v^*_K)\in\KKK
\;\bigg |\;
(\forall i\in\{1,\ldots,m\})\;\;z_i-\sum_{k=1}^KL_{ki}^*v_k^*\in
\partial f_i(x_i)\:\;\text{and}\\
(\forall k\in\{1,\ldots,K\})\;\;\sum_{i=1}^mL_{ki}x_i-r_k\in
\partial g_k^*(v_k^*)\bigg\}.
\end{multline}
\end{problem}

The following corollary provides a strongly convergent method to
solve Problem~\ref{prob:13}. Recall that the Moreau proximity 
operator \cite{Mor62b} of a function $\varphi\in\Gamma_0(\HH)$ is
$\prox_\varphi=J_{\partial\varphi}$, i.e., the operator which maps
every point $x\in\HH$ to the unique minimizer of the function 
$y\mapsto\varphi(y)+\|x-y\|^2/2$.

\begin{corollary}
\label{c:9g45g2h13}
Consider the setting of Problem~\ref{prob:13}. 
Let $\varepsilon\in\zeroun$ and execute 
\eqref{e:j7yG9i-9jmL407a}, where $J_{\gamma_n A_i}$ is 
replaced by $\prox_{\gamma_n f_i}$ and 
$J_{\mu_n B_k}$ is replaced by $\prox_{\mu_n g_k}$. Then the 
following hold:
\begin{enumerate}
\item
$(\overline{x}_1,\ldots,\overline{x}_m)$ solves 
\eqref{e:9i-9jmL420p} and 
$(\overline{v}^*_1,\ldots,\overline{v}^*_m)$ 
solves \eqref{e:9i-9jmL420d}.
\item
For every $i\in\{1,\ldots,m\}$, $x_{i,n}\to\overline{x}_i$.
\item
For every $k\in\{1,\ldots,K\}$, $v^*_{k,n}\to\overline{v}^*_k$.
\end{enumerate}
\end{corollary}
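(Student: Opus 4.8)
The plan is to observe that Problem~\ref{prob:13} is nothing but the instance of Problem~\ref{prob:12} in which $A_i=\partial f_i$ and $B_k=\partial g_k$, and then to read off the conclusions from Proposition~\ref{p:9g45g2h07}. First I would note that, since $f_i\in\Gamma_0(\HH_i)$ and $g_k\in\Gamma_0(\GG_k)$, the operators $\partial f_i$ and $\partial g_k$ are maximally monotone \cite[Theorem~20.25]{Livre1}, so they are legitimate data for Problem~\ref{prob:12}. Two elementary identities make the substitution seamless: on the one hand $J_{\gamma_n\partial f_i}=\prox_{\gamma_n f_i}$ and $J_{\mu_n\partial g_k}=\prox_{\mu_n g_k}$ by the definition of the proximity operator recalled before the statement, so the iteration described in the corollary is \emph{literally} \eqref{e:j7yG9i-9jmL407a} with $A_i=\partial f_i$ and $B_k=\partial g_k$; on the other hand $(\partial g_k)^{-1}=\partial g_k^*$ \cite{Livre1}, whence the Kuhn--Tucker set \eqref{e:9g45g2h07k} of Problem~\ref{prob:12} is exactly the set \eqref{e:9i-9jmL420k}. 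Finally, with these choices the coupled inclusions \eqref{e:lk87b'kk-24p} become $z_i\in\partial f_i\overline{x}_i+\sum_{k=1}^K L_{ki}^*\partial g_k(\sum_{j=1}^m L_{kj}\overline{x}_j-r_k)$, so the solvability hypothesis required by Problem~\ref{prob:12} is precisely the range condition \eqref{e:2012-10-21a}. Proposition~\ref{p:9g45g2h07} therefore applies and yields at once the strong convergences $x_{i,n}\to\overline{x}_i$ and $v_{k,n}^*\to\overline{v}_k^*$ of parts (ii) and (iii), with the limit $(\overline{x}_1,\dots,\overline{x}_m,\overline{v}_1^*,\dots,\overline{v}_K^*)=P_{\boldsymbol{Z}}(\boldsymbol{x}_0,\boldsymbol{v}_0^*)$ lying in $\boldsymbol{Z}$.

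It then remains to prove part (i), i.e.\ to upgrade ``Kuhn--Tucker point'' to ``primal and dual minimizer''. Through the reduction carried out in the proof of Proposition~\ref{p:9g45g2h07}, Proposition~\ref{p:0kkhUj710-31z} guarantees that $\overline{\boldsymbol{x}}$ solves the primal inclusion \eqref{e:lk87b'kk-24p} and that $\overline{\boldsymbol{v}}^*$ solves the dual inclusion \eqref{e:lk87b'kk-24d}. To translate these into optimality I would introduce the separable functions $F\colon\boldsymbol{x}\mapsto\sum_{i=1}^m(f_i(x_i)-\scal{x_i}{z_i})$ and $G\colon\boldsymbol{y}\mapsto\sum_{k=1}^K g_k(y_k-r_k)$, together with the operator $L$ of \eqref{e:9g45g2h10a}, so that $A=\partial F$, $B=\partial G$, and the primal objective \eqref{e:9i-9jmL420p} is $F+G\circ L$. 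A short conjugation computation gives $F^*(\boldsymbol{u}^*)=\sum_i f_i^*(u_i^*+z_i)$ and $G^*(\boldsymbol{v}^*)=\sum_k(g_k^*(v_k^*)+\scal{v_k^*}{r_k})$, so that $\boldsymbol{v}^*\mapsto F^*(-L^*\boldsymbol{v}^*)+G^*(\boldsymbol{v}^*)$ is exactly the dual objective \eqref{e:9i-9jmL420d}.

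With this dictionary in place, the conclusion follows from convex subdifferential calculus. Since the primal inclusion reads $0\in\partial F(\overline{\boldsymbol{x}})+L^*\partial G(L\overline{\boldsymbol{x}})$, and since the always-valid inclusion $\partial F(\overline{\boldsymbol{x}})+L^*\partial G(L\overline{\boldsymbol{x}})\subset\partial(F+G\circ L)(\overline{\boldsymbol{x}})$ holds, Fermat's rule \cite{Livre1} shows that $\overline{\boldsymbol{x}}$ minimizes $F+G\circ L$, i.e.\ solves \eqref{e:9i-9jmL420p}; the same argument applied to the dual inclusion $0\in-L\partial F^*(-L^*\overline{\boldsymbol{v}}^*)+\partial G^*(\overline{\boldsymbol{v}}^*)$ shows that $\overline{\boldsymbol{v}}^*$ solves \eqref{e:9i-9jmL420d}. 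The only point demanding care---and hence the main (if modest) obstacle---is this last identification of the inclusion problems with the minimization problems: it relies on the sufficiency of the Kuhn--Tucker conditions for optimality, which is purely a matter of convexity and requires no constraint qualification in the direction used here, so beyond bookkeeping of the conjugates and the bilinear coupling terms it presents no real difficulty.
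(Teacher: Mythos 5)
Your proposal is correct and follows essentially the same route as the paper: set $A_i=\partial f_i$ and $B_k=\partial g_k$, observe that Problem~\ref{prob:13} becomes an instance of Problem~\ref{prob:12}, and invoke Proposition~\ref{p:9g45g2h07}. The only difference is that the paper delegates the step ``Kuhn--Tucker points provide primal and dual minimizers'' to a citation of \cite[Proposition~5.4]{Siop13}, whereas you prove it directly via the always-valid inclusions $\partial F(\overline{\boldsymbol{x}})+L^*\partial G(L\overline{\boldsymbol{x}})\subset\partial(F+G\circ L)(\overline{\boldsymbol{x}})$ and Fermat's rule --- a correct and self-contained substitute for that reference.
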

\begin{proof}
Let us define 
$(\forall i\in\{1,\ldots,m\})$ $A_i=\partial f_i$ and 
$(\forall k\in\{1,\ldots,K\})$ $B_k=\partial g_k$. Then, as shown 
in the proof of \cite[Proposition~5.4]{Siop13},
\eqref{e:2012-10-21a} implies that Problem~\ref{prob:12} assumes 
the form of Problem~\ref{prob:13} and that Kuhn-Tucker points
provide primal and dual solutions. Hence, applying 
Proposition~\ref{p:9g45g2h07} in this setting yields the claims.
\end{proof}

\end{document}